\title[On Integers Whose Sum is the Reverse of their Product]
      {On Integers Whose Sum is the Reverse of their Product}
\author{Xander Faber}
\address{Institute for Defense Analyses \\
Center for Computing Sciences \\
17100 Science Drive \\
Bowie, MD} 
\email{awfaber@super.org}
\author{Jon Grantham}
\address{Institute for Defense Analyses \\
Center for Computing Sciences \\
17100 Science Drive \\
Bowie, MD} 
\email{grantham@super.org}
\begin{document}


\begin{abstract}
We determine all pairs of positive integers $(a,b)$ such that $a+b$ and $a
\times b$ have the same decimal digits in reverse order:
\[
(2,2), (9,9), (3,24), (2,47), (2,497), (2,4997), (2,49997), \ldots
\]
We use deterministic finite automata to describe our approach, which naturally
extends to all other numerical bases. Our automata are a variation on the notion
of Young graphs, which were introduced by Sloane to study ``reverse multiples''.
\end{abstract}
\maketitle


\section{Introduction}
  During a homeschool math lesson, the first author's children made the curious
  observation that $9 + 9 = 18$ and $9 \times 9 = 81$ --- that is, the sum and
  product are the reverse of each other.  A short computer search revealed the
  more interesting examples
\begin{align*}
  2 + 47 = 49 \quad &\text{and} \quad 2\times 47 = 94 \\
  3 + 24 = 27 \quad &\text{and} \quad 3\times 24 = 72.
\end{align*}
Are there other examples of integer pairs $(a,b)$ for which the digits of $a+b$
are the reverse of the digits of $ab$?

To formalize the problem, we say that a base-$\beta$ representation of a
positive integer is in canonical form if it has no leading zero. A pair of
positive integers $a \le b$ will be called a \textbf{reversed sum-product pair
  for the base $\beta$} if the canonical representations of $a+b$ and $ab$ in
base $\beta$ are the reverse of each other. We insist that all numbers be
written in canonical form in order to avoid examples like $a = 15$ and $b = 624$
in base~10, for which $ab = 9360$ and $a + b = 0639$. (Allowing non-canonical
representations is also interesting, but we do not take up that mantle in this
paper.)

\begin{theorem}
The complete list of reversed sum-product pairs in base 10 is
\[
(2,2), (9,9), (3,24), (2,47), (2,497), (2,4997), (2,49997), \ldots
\]
\end{theorem}

Our proof technique is algorithmic in nature. If $a$ is \textit{too large}, then
$a+b$ will have fewer digits than $ab$. Given any choice of \textit{small} $a$
and a guess for the first and last digit of $b$, we give a recursive procedure
for constructing more digits of $b$. This procedure only continues indefinitely
in one case --- the infinite family $(2,47)$, $(2,497), (2,4997), (2,49997),
\ldots$

Whether the sum and product of two numbers have the same digits in reverse order
depends on the choice of numerical base.  Our procedure for describing reversed
sum-product pairs for the base~10 applies to an arbitrary base~$\beta \ge
2$. Consider base~18, where we use the digits $0, 1, 2, \ldots, 9$, $A$, $B$,
$C$, \ldots, $H$. The complete list of reversed sum-product pairs for the
base~18 is
\begin{align}
  \label{eq:rsp18}
  &(2, 2), (H, H), (3, 37), (4, 25), \notag \\
  &(7, 2483D8), (7,2483D9E483D8), (7, 2483D9E483D9E483D8), \ldots \\
  &(B, 1961DC5), (B, 1961DBG461DC5), (B, 1961DBG461DBG461DC5), \ldots \notag
\end{align}

For a given base~$\beta$ and value $a$, the recursive procedure for constructing
digits of $b$ is best described by a deterministic finite automaton (DFA). In
this paper, a DFA is a directed graph with one vertex designated as the
``initial state'', one or more vertices that are ``accepting states'', and edge
labels from some ``alphabet''. Starting at the initial state of a DFA, we can
walk through the graph while writing down the edge labels we pass. If we stop at
an accepting state, then the string of labels we have written is ``accepted'' by
the DFA.
See \cite[\S2.2]{Hopcroft_Ullman} for the formal definition of a DFA and many
more details. For additional connections between automata and number theory, we
recommend \cite{Allouche_Shallit_book} and \cite{RSS_automata}.

\begin{figure}[hb]

\begin{tikzpicture}
  \draw [thick,blue] (0,0) circle (12pt);
  \draw [thick,blue] (2,0) circle (12pt);
  \draw [thick,blue] (2,0) circle (10pt);
  \draw [thick,blue] (2,-2) circle (12pt);
  \draw [thick,blue] (2,-2) circle (10pt);  
  \node at (0,0) {$s_i$};
  \node at (2,0) {$s_{1,1}$};
  \node at (2,-2) {$s_o$};

  \draw [thick,->] (-1,0) -- (-.5,0);
  \draw [thick,->] (.5,0) -- (1.5,0);
  \draw [thick,->] (2,-.5) -- (2,-1.5);
  \draw [thick,->] (0.35, -0.35) -- (1.65,-1.65);
  \draw [thick,->] (2.5,.25) .. controls (3.5,1) and (3.5,-1) .. (2.5,-.25);
  \node at (1,.25) {\footnotesize{$(4,7)$}};
  \node at (2.3,-1) {\footnotesize{$(9)$}};
  \node at (3.65,0) {\footnotesize{$(9,9)$}};
  \node at (0.65,-1.1) {\footnotesize{$(2)$}};
\end{tikzpicture}
  \caption{A deterministic finite automaton for $a = 2$ for the base~10.}
  \label{fig:first_dfa}
\end{figure}
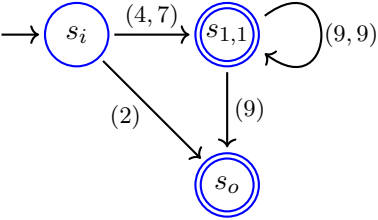

Returning to the issue at hand, integers $b$ that make a base-$\beta$ reversed
sum-product pair with $a$ correspond to accepted strings in a particular
DFA. For example, a DFA for reversed sum-product pairs for the base~10 with $a =
2$ is given in Figure~\ref{fig:first_dfa}.  The initial state is $s_i$. The
accepting states are $s_{1,1}$ and $s_{o}$, drawn with double circles. (The
notation for the states will be explained when we construct the DFAs in
\S\ref{sec:DFA}.)  As we walk through the DFA along directed edges, the edge
labels describe how to build $b$ --- not from left-to-right, but from
out-to-in. Consider the sequence of states $s_i \to s_{1,1} \to s_{1,1}$. The
state $s_{1,1}$ is accepting, so we can legally stop there. The associated
sequence of edge labels is $(4,7), (9, 9)$. The first term tells us that $b =
4\cdots 7$; the second term gives $b = 4997$. Similarly, the sequence of states
$s_i \to s_{1,1} \to s_{1,1} \to s_o$ gives rise to $b = 49997$.

Looking again at our list of reversed sum-product pairs for the base 18 in
\eqref{eq:rsp18}, the patterns become more apparent when we examine the
associated DFAs. For example, Figure~\ref{fig:dfa18} illustrates the DFA for $a
= 7$. Going around the cycle in the DFA gives an infinite family of values $b$
such that $(7,b)$ is a reversed sum-product pair for the base 18.
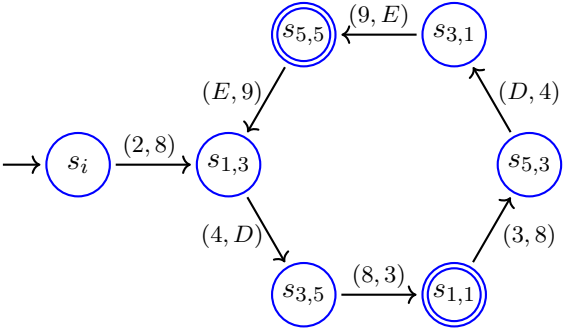
\begin{figure}[htb]

\begin{tikzpicture}
  \draw [thick,blue] (2,0) circle (12pt);     
  \draw [thick,blue] (4,0) circle (12pt);     
  \draw [thick,blue] (5,-1.73) circle (12pt); 
  \draw [thick,blue] (7,-1.73) circle (12pt); 
  \draw [thick,blue] (7,-1.73) circle (10pt);
  \draw [thick,blue] (8,0) circle (12pt);     
  \draw [thick,blue] (7,1.73) circle (12pt);  
  \draw [thick,blue] (5,1.73) circle (12pt);  
  \draw [thick,blue] (5,1.73) circle (10pt);  
  
  \node at (2,0) {$s_i$};
  \node at (4,0) {$s_{1,3}$};
  \node at (5,-1.73) {$s_{3,5}$};
  \node at (7,-1.73) {$s_{1,1}$};
  \node at (8,0) {$s_{5,3}$};
  \node at (7,1.73) {$s_{3,1}$};
  \node at (5,1.73) {$s_{5,5}$};

  \draw [thick,->] (1,0) -- (1.5,0);
  \draw [thick,->] (2.5,0) -- (3.5,0);
  \draw [thick,->] (4.25,-0.433) -- (4.75,-1.299);
  \draw [thick,->] (5.5, -1.73) -- (6.5, -1.73);
  \draw [thick,->] (7.25,-1.297) -- (7.75,-0.433);
  \draw [thick,->] (7.75,.433) -- (7.25,1.299);  
  \draw [thick,->] (6.5, 1.73) -- (5.5, 1.73);    
  \draw [thick,->] (4.75,1.297) -- (4.25,.43);

  \node at (2.95,.25) {\footnotesize{$(2,8)$}};
  \node at (4.05,-.95) {\footnotesize{$(4,D)$}};
  \node at (6,-1.48) {\footnotesize{$(8,3)$}};
  
  \node at (8,-.95) {\footnotesize{$(3,8)$}};
  \node at (8,.95) {\footnotesize{$(D,4)$}};
  
  \node at (6,1.98) {\footnotesize{$(9,E)$}};
  \node at (4.05,.95) {\footnotesize{$(E,9)$}};    
\end{tikzpicture}
  \caption{A deterministic finite automaton for the base~18 with $a = 7$.}
  \label{fig:dfa18}
\end{figure}

The upshot of our investigation for arbitrary bases is the following finiteness
result --- see \S\ref{sec:DFA} for a more precise statement:

\begin{theorem}
  \label{thm:finite}
  Fix a base $\beta \ge 2$. For each $1 \le a < \beta$, there is a deterministic
  finite automaton $A_{\beta,a}$ whose accepted strings correspond to the values
  $b$ such that $(a,b)$ is a reversed sum-product pair for the
  base~$\beta$. Conversely, every reversed sum-product pair arises in this way.
\end{theorem}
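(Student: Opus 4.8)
The plan is to reduce the whole question to a recursion on the digits of $S:=a+b$ and then read off the automaton. Write $P:=ab$. The statement that $(a,b)$ is a reversed sum-product pair is exactly that $S$ and $P$ have the same number $m$ of digits and that $P$ is the digit-reversal of $S$; since $b=S-a$ we have $P=aS-a^2$, so the condition is equivalent to one phrased purely in terms of $S=\sum_{j=0}^{m-1}\sigma_j\beta^j$: its reversal $\bar S:=\sum_{j=0}^{m-1}\sigma_j\beta^{m-1-j}$ satisfies $aS-\bar S=a^2$, the low digit $\sigma_0$ is nonzero (so that $\bar S$ genuinely has $m$ digits, i.e.\ $ab$ carries no leading zero), and $S\ge 2a$ (so that $b=S-a\ge a\ge1$). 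So it suffices to build a DFA accepting exactly those $S$, read from the outside in; its edge labels are then rewritten in terms of the digits of $b$ by the deterministic, borrow-tracked subtraction $b=S-a$, which changes only the low-order digits of $S$.

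First I would expand $aS-\bar S=a^2$ by pairing the $j$th and $(m{-}1{-}j)$th digits of $S$. The pair $\{j,\ m{-}1{-}j\}$ contributes $t_j\beta^j+u_j\beta^{m-1-j}$, with $t_j:=a\sigma_j-\sigma_{m-1-j}$ and $u_j:=a\sigma_{m-1-j}-\sigma_j$, while the central digit (when $m$ is odd) contributes $(a-1)\sigma_{(m-1)/2}\beta^{(m-1)/2}$. Reading the pairs $j=0,1,2,\dots$, the terms $t_j\beta^j$ control the low-order end of the identity by an ordinary carry recursion: there are carries $\rho_j$ with $\rho_0=0$, the congruence $t_j+\rho_j\equiv e_j\pmod\beta$ must hold at each step (where $e_j$ is the $j$th base-$\beta$ digit of $a^2$, and $a^2<\beta^2$, so $e_j=0$ for $j\ge2$), and $\rho_{j+1}=\beta^{-1}(t_j+\rho_j-e_j)$; since $|t_j|\le a(\beta-1)<\beta^2$, these $\rho_j$ stay in a fixed bounded range. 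The high-order end is the subtle part: the terms $u_j\beta^{m-1-j}$ form a carry chain we are forced to traverse \emph{in reverse}, from the most significant toward the middle. I would track it through $W_{-1}:=0$ and $W_j:=\beta W_{j-1}+u_j$, so that $W_{r-1}=\sum_{j=0}^{r-1}u_j\beta^{r-1-j}$ (with $r=\lfloor m/2\rfloor$) is, up to a power of $\beta$, the whole high-order contribution, which the identity forces to cancel the carry left over from the low-order end --- for instance $W_{r-1}=-\rho_r$ when $m$ is even.

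The crux is then the lemma that every valid $S$ satisfies $|W_j|\le C$ for all $j$, with $C$ depending only on $a$ and $\beta$: if $|W_{j_0}|$ were large for some $j_0$ strictly before the middle, then since $|W_{r-1}|\ge\beta^{\,r-1-j_0}|W_{j_0}|-\sum_{i>j_0}|u_i|\,\beta^{\,r-1-i}$ while $\sum_{i>j_0}\beta^{\,r-1-i}<\beta^{\,r-1-j_0}$, one sees that $|W_{r-1}|$ would exceed $\beta$ times its (bounded) admissible value, a contradiction. This boundedness is exactly what makes the reverse carry chain finite-state. Granting it, I would take the states of $A_{\beta,a}$ to be the tuples recording the current carry $\rho_j$, the current value $W_j$, the subtraction borrow needed to pass from digits of $S$ to digits of $b$, and a bit marking whether the central digit has been consumed --- a finite set, by the two bounds. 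A transition reads an admissible pair of digits and updates the tuple, the run dying whenever the congruence fails or a bound is exceeded; a separate single-digit transition handles the central digit of odd length, and single-digit transitions out of the initial state handle $m=1$. A state is accepting precisely when the run may legally stop there: the low-order recursion has reproduced the digits of $a^2$, the bounded quantities close up (e.g.\ $W=-\rho$ in even length), $\sigma_0\ne0$, and $S\ge 2a$. By construction the accepted strings --- relabeled via $b=S-a$ --- are exactly the $b$ for which $(a,b)$ is a reversed sum-product pair.

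It remains to prove the converse, that every reversed sum-product pair arises from one of the $A_{\beta,a}$ with $1\le a<\beta$; that is, that $a<\beta$ for every such pair. If $a\ge\beta$, then $ab\ge\beta b$ has strictly more digits than $b$, so for $ab$ and $a+b$ to have the same digit count one needs $a+b\ge\beta^{\,n}$, where $n$ is the number of digits of $b$; since $a+b<2b<2\beta^{\,n}$, the leading digit of $a+b$ is forced to be $1$, hence the units digit of $ab$ is forced to be $1$, and pushing the estimates further (using $a\le b$, $\beta^{\,n-1}\le b$, and $ab<\beta^{\,n+1}$) shows that for all but finitely many short $b$ the number $ab$ would in fact have $n+2$ digits --- impossible --- while the remaining short cases are eliminated by a direct check. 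I expect the only genuinely non-routine points to be this converse estimate and the bounded-$W$ lemma; verifying that the transition and acceptance conditions are exactly right, and that the $S\leftrightarrow b$ relabeling is well defined, is bookkeeping.
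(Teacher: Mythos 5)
Your route is genuinely different from the paper's. The paper first proves ({\S}2--{\S}4) that $a<\beta$, that $b$, $a+b$, and $ab$ all have the same number of digits, and that the addition $a+b$ involves no carry at all; it then runs a recursion directly on the digits of $b$ from the outside in, whose entire state is the pair of product carries $(\lambda,\rho)$, each bounded by $a$, with two termination rules ($\lambda=\rho$, or a repeated middle digit). You instead encode everything in $S=a+b$ through the single identity $aS-\bar S=a^2$ and make the high-order half finite-state via the bounded-$W$ lemma. That lemma is sound: since $W_{j+1}=\beta W_j+u_{j+1}$ with $|u_{j+1}|<(a+1)\beta$, once $|W_j|$ passes a threshold of size roughly $(a+1)\beta$ plus the admissible terminal value it can never return, so every valid $S$ keeps $W$ bounded and no valid string is killed by the cutoff. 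What your approach buys is a construction needing no preliminary carry analysis; what it costs is a larger state space ($W$ ranges over about $a\beta$ values versus the paper's $a^2$ carry pairs) and heavier acceptance bookkeeping --- the transition at step $j$ must consume the digit $e_j$ of $a^2$, so the state must also record whether $e_0$ and $e_1$ have been used, and your closing condition $W_{r-1}=-\rho_r$ is only the case $m$ even, $r\ge 2$ (for $m=2$ the correct condition is $\rho_1+W_0=e_1$).

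Two points need real repair rather than bookkeeping. First, relabeling edges from digits of $S$ to digits of $b$ is not a deterministic local rewrite: reading outside-in, the automaton emits the digit of $b$ in position $m-1-j$ at step $j$, but whether the borrow from $S-a$ reaches that position depends on low digits not yet read, so a borrow flag in the state cannot retroactively fix labels already emitted. The cure is either to prove that every accepted $S$ with $m\ge 2$ has $\sigma_0\ge a$ (equivalently $\sigma_0=a+b_0$ with no carry, which is exactly the content of the paper's same-length lemma and no-carry proposition in {\S}4), so that only the low half of the first letter changes, or to weaken the claim to the correspondence $b=S-a$ instead of the string literally spelling $b$. Second, your converse is thinner than advertised: the reduction to $a+b$ having at most three digits goes through with your estimates, but ``the remaining short cases are eliminated by a direct check'' is not a finite check, since $\beta$ is arbitrary; it is a parametric case analysis in $\beta$, and it contains a genuine near-miss, $a=\beta+1$ and $b=\beta^2-\beta-1$ with $a+b=\beta^2$, which must be excluded explicitly, as the paper does in {\S}3.
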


A closely related phenomenon can be found in ``reverse multiples'': integers
whose digit reversals are multiples of themselves. For example, in base 10, the
only 4-digit reverse multiples are $9 \times 1089 = 9801$ and $4 \times 2178 =
8712$. Young found a construction of these numbers using special rooted trees in
\cite{young1,young2}, and Sloane reworked these trees into a DFA construction
similar to ours \cite{2178}, though the author refers to them as ``Young
graphs''. See \cite{holt_permutiples} for yet another variation on this theme:
integers $n$ for whom some nontrivial multiple permutes the digits of $n$.

We will break our discussion of the algorithm for describing reversed
sum-product pairs into three sections, corresponding to the relative sizes of
$a$, $b$, and the base $\beta$:
\begin{itemize}
\item (small $b$) $a \le b < \beta$
  \smallskip
\item (large $a$) $\beta < a \le b$
  \smallskip
\item (small $a$, large $b$) $a < \beta < b$
\end{itemize}
In Section~\ref{sec:single-digit}, we show that there are only two reversed
sum-product pairs when $b$ is small: $(2,2)$ and $(\beta-1,\beta-1)$. In
Section~\ref{sec:large-a}, we find there is no reversed sum-product pair with
large~$a$. For the final case, it will be useful to know that we do not need to
``carry'' when computing the sum of $a$ and $b$; this is proved in
Section~\ref{sec:prelims}. We describe our recursive algorithm and the
construction of DFAs in Section~\ref{sec:algorithm}, including a careful
explanation for the base~10. Python code for exploring and visualizing this
construction for an arbitrary base is available at
\begin{center}\url{https://github.com/RationalPoint/reverse}. \end{center}

Next we turn to a kind of opposite problem. Instead of fixing the numerical
base, we fix a positive integer $a$ and ask for which bases $\beta > a$ there
exists a reversed sum-product pair containing $a$. Remarkably, this set has an
enormous amount of structure. Let us say that $(2,2)$ and $(\beta-1,\beta-1)$
are the \textbf{uninteresting} reversed sum-product pairs because they are
present for all but the smallest bases $\beta$; any other pair is
\textbf{interesting}.

\begin{theorem}
  Fix $a \ge 2$. The set of bases $\beta$ for which there exists an interesting
  base-$\beta$ reversed sum-product pair $(a,b)$ is the union of a nonzero
  finite number of arithmetic progressions modulo $a^2 - 1$.
\end{theorem}

In particular, for a fixed $a$, the set of bases $\beta$ for which there exists
an interesting base-$\beta$ reversed sum-product pair containing $a$ has
positive density. We give more precise statements in
Theorem~\ref{thm:participate} and Corollary~\ref{cor:density}, and we calculate
this density for $a \le 10$ at the end of Section~\ref{sec:participate}.

Our investigation led to an intriguing phenomenon that we were unable to fully
explain:

\begin{conjecture}
  \label{conj:interesting}
  The only bases for which there is no interesting reversed sum-product
  pair $(a,b)$ are 
  \[
  2,3,4,5,6,7,8,9,12,15,21.
  \]
\end{conjecture}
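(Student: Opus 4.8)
The plan is to split the statement into two halves --- that none of the eleven listed bases admits an interesting pair, and that every other base admits one --- and to handle these by very different means.

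For the first half, fix a base $\beta \in \{2,3,\dots,9,12,15,21\}$. By the treatment of the ranges $a \le b < \beta$ and $\beta < a \le b$ in Sections~\ref{sec:single-digit} and~\ref{sec:large-a}, the only reversed sum-product pairs with $b < \beta$ are the uninteresting ones, and there is no pair with $\beta < a$; hence every interesting pair has $a < \beta < b$, and so occurs among the strings accepted by one of the automata $A_{\beta,a}$ of Theorem~\ref{thm:finite}. I would build $A_{\beta,a}$ for each of the finitely many relevant pairs $(\beta,a)$ and verify, by inspecting its reachable accepting states and its cycles, that no accepted string encodes a value $b > \beta$. This is a finite computation, and it is exactly the sort of thing the accompanying Python code (Section~\ref{sec:implementation}) is built to carry out.

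For the second half I need, for every base $\beta$ not on the list, one explicit interesting pair, and the natural place to look is among pairs $(a,b)$ with $b$ short. Write $b = [b_{d-1},\dots,b_1,b_0]_\beta$ for the integer with these base-$\beta$ digits, and take $1 \le a < \beta$. Since forming $a+b$ involves no carry (Section~\ref{sec:prelims}), the requirement that $a+b$ be the reverse of $ab$ collapses to the single identity
\[
  a\big(b_{d-1}\beta^{d-1}+\cdots+b_1\beta+b_0\big) \;=\; (b_0+a)\beta^{d-1}+b_1\beta^{d-2}+\cdots+b_{d-2}\beta+b_{d-1},
\]
subject to $0 \le b_i < \beta$, to $b_0 + a < \beta$, and to $ab$ having exactly $d$ digits; reducing this modulo $\beta-1$ and rearranging yields the handy necessary condition $(a-1)(b-1)\equiv 1 \pmod{\beta-1}$. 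For $d = 2$, with $b = [x,y]_\beta$, the identity reads $\beta(ax-y-a) = x - ay$; solving it while tracking the integer $q = ax-y-a$ (equivalently $\beta q = x-ay$) produces, for each admissible $a$, an interesting pair for every $\beta$ lying in a fixed residue class modulo $a^2-1$ and exceeding a threshold dictated by the digit bounds. The case $q = -1$ gives $b = [\,k+1,\ ak+1\,]_\beta$ with $\beta = (a-1)\big(k(a+1)+1\big)$; for $a = 2$, $k = 3$ this is the base-$10$ pair $(2,47)$, and --- still with $a = 2$ --- inserting repeated copies of the digit $\beta-1$ into the middle of such a two-digit $b$ yields the rest of the family $(2,497),(2,4997),\dots$ encoded by the automaton of Figure~\ref{fig:first_dfa}. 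Two-digit values of $b$ do not suffice for every base: base $14$, for instance, has no interesting pair with $d = 2$, but $(5,440)$ works there because $5+440 = [2,3,11]_{14}$ is the reverse of $5\times 440 = [11,3,2]_{14}$. One therefore continues with the $d = 3$ families (and $d \ge 4$ if needed), each again covering an arithmetic progression of bases above an explicit bound.

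The remaining step --- and, I expect, the crux --- is a covering argument: to select finitely many of these families so that their progressions, above some effectively computable bound $B$, exhaust every base, and then to confirm, using the finite computation of the first half, that no base $\beta$ with $10 \le \beta < B$ and $\beta\notin\{12,15,21\}$ is omitted. Two features make this hard. First, it has the texture of a covering-system problem: a $d=2$ family determines $\beta$ only modulo $a^2-1$, so one must rule out a common gap among the relevant residue classes --- complicated by the fact that the size constraints let each family cover only the tail of its progression, so a base lying just below the admissible range of every short family must be picked up by a longer-$b$ family instead. Second, and more seriously, there appears to be no a priori bound on how many digits $b$ must have for a given base (two for base $10$, three for base $14$, six for the family $(7,2483D8),\dots$ of base $18$), so it is not clear that finitely many families suffice at all. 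Supplying that missing ingredient --- an effective bound $d(\beta)$ on the length of the shortest interesting $b$, or a structural feature of the automata $A_{\beta,a}$ forcing a sufficiently long accepting path whenever $\beta$ is not on the list --- seems to be what stands between this conjecture and a theorem.
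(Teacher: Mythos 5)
There is no proof in the paper for you to match: this statement is explicitly labelled a \emph{conjecture}. The authors do not prove it --- they verify it by computer for all $\beta \le 1{,}441{,}440$ and show, using the participation machinery of \S\ref{sec:participate}, that at least $99.8\%$ of bases admit an interesting pair; the full statement remains open. So your proposal cannot be judged against an existing argument, and, as you yourself conclude, it is not a proof either.

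That said, your diagnosis of where the difficulty sits is accurate, and your ingredients largely coincide with the paper's partial-progress toolkit. The first half (checking the eleven listed bases) really is a finite DFA computation. For the second half, your two-digit $q=-1$ family $b=[\,k+1,\ ak+1\,]_\beta$ with $\beta=(a^2-1)k+a-1$ is exactly Example~\ref{ex:participate}, and the observation that each family covers (the tail of) an arithmetic progression modulo $a^2-1$ is formalized in the paper by Lemma~\ref{lem:next_one}, with Lemma~\ref{lem:no_zero} excluding the residue class $0$; these yield only the density-type statements above, not a covering of all bases outside the list. The genuine gap is the one you name: there is no known bound, effective or otherwise, on the digit length of the shortest interesting $b$ for a given $\beta$ (nor a structural result about the automata $A_{\beta,a}$ forcing a reachable accepting state for all $\beta$ off the list), so one cannot reduce to finitely many families and a finite check below a threshold $B$. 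Until that ingredient exists, the covering argument cannot be completed, which is precisely why the paper states this as a conjecture supported by computation rather than as a theorem.
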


Using computer calculation and the tools in Section~\ref{sec:participate}, we
have verified that our conjecture holds for $\beta < 1,441,440$. We also prove
that at least $99.3\%$ of all bases admit an interesting reversed sum-product
pair. This computation is explained in Section~\ref{sec:compute}.

\noindent \textbf{Conventions.}
Throughout this article, we assume that $a \le b$. If an integer $n$ has
base-$\beta$ expansion
\[
n = n_r \beta^r + n_{r-1}\beta^{r-1} + \cdots + n_1\beta + n_0,
\]
we will say that $n_r$ is the ``first digit'' of $n$ and $n_0$ is the ``last
digit''. If $a \le b$ is a reversed sum-product pair for the base $\beta$, then
neither one is divisible by $\beta$; indeed, the product would have trailing
zeros.


\newpage
\section{Small \texorpdfstring{$b$}{b}}
\label{sec:single-digit}

Suppose that $a \le b < \beta$ is a reversed sum-product pair for the base
$\beta$. Then we will show that exactly one of the following is true:
  \begin{itemize}
  \item $(a,b) = (2,2)$ and $\beta \ge 5$; or
    \smallskip
  \item $(a,b) = (\beta-1,\beta-1)$ and $\beta \ge 3$.
  \end{itemize}

  Suppose first that $a+b < \beta$. Then $ab$ and $a+b$ have a single digit in
  base $\beta$, so $ab = a+b$. Rearranging shows
  \[
    (a-1)(b-1) = 1 \quad \Longrightarrow \quad a = b = 2.
  \]
  Our hypothesis that $a + b < \beta$ now becomes $\beta > 4$. 

  Now suppose that $a+b > \beta$. Since  $a+b < 2\beta$, we can write
  \begin{equation}
    \label{eq:one_digit1}
    a + b = \beta + x, \quad 1 \le x < \beta. 
  \end{equation}
  Since $ab$ is the reverse of $a+b$, we have 
  \begin{equation}
    \label{eq:one_digit2}
    ab = x\beta + 1. 
  \end{equation}
  Solving \eqref{eq:one_digit1} for $x$ and inserting into \eqref{eq:one_digit2} gives
  \[
  ab = 1 + a\beta + b\beta - \beta^2.
  \]
  Rearranging yields
  \[
    (\beta-a)(\beta-b) = 1.
  \]
  Since $1 \le (\beta-a), (\beta-b) < \beta$, we must have $a = b =
  \beta-1$. Note that this means $x = \beta-2$, which is a valid first digit
  only when $\beta \ge 3$.


\section{Large \texorpdfstring{$a$}{a}}
  \label{sec:large-a}

If $a$ is large, then we expect $ab$ to have more digits than $a+b$. The
next lemma uses this idea to produce a coarse upper bound for $a$.

\begin{lemma}
  \label{lem:dda}
  Suppose that $a \le b$ is a reversed sum-product pair with $a > \beta$. Then
  $a < 2\beta$ and $b < \beta(\beta+1)$.
\end{lemma}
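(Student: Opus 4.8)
The plan is to convert the equal-digit-count condition into explicit inequalities relating $a$, $b$, and powers of $\beta$, and then pinch. Let $d$ denote the common number of base-$\beta$ digits of $a+b$ and of $ab$, so that
\[
\beta^{d-1} \le a+b < \beta^d \qquad\text{and}\qquad \beta^{d-1} \le ab < \beta^d .
\]
Since $a \ge \beta+1$ and $b \ge a \ge \beta+1$, we get $ab \ge (\beta+1)^2 > \beta^2$, so $d \ge 3$. The whole proof is an exercise in combining these four inequalities together with $a \le b$.

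First I would bound $a$ from above. Because $a \le b$, we have $b \ge (a+b)/2$, and therefore
\[
\beta^d > ab \ge a\cdot\frac{a+b}{2} \ge \frac{a}{2}\,\beta^{d-1}.
\]
Dividing through by $\beta^{d-1}$ gives $a < 2\beta$, which is the first assertion.

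Next I would feed this bound back in to control $b$ and $d$ at once. On the one hand, $a+b \ge \beta^{d-1}$ together with $a < 2\beta$ gives $b > \beta^{d-1} - 2\beta$; on the other hand, $ab < \beta^d$ together with $a \ge \beta+1$ gives $b < \beta^d/(\beta+1)$. For these two estimates to be compatible we need $\beta^{d-1} - 2\beta < \beta^d/(\beta+1)$, which rearranges to $\beta^{d-1} < 2\beta(\beta+1)$. When $\beta \ge 3$ this forces $d \le 3$, hence $d = 3$, and then $b < \beta^3/(\beta+1) < \beta(\beta+1)$, as claimed. When $\beta = 2$ the inequality only yields $d \le 4$, but then $b < \beta^d/(\beta+1) \le 2^4/3 < 6 = \beta(\beta+1)$ directly.

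I expect the only real obstacle to be spotting the right inequality chain: the estimate $a < 2\beta$ drops out cleanly only once one thinks to replace $b$ by $(a+b)/2$ inside the product, and the bound on $b$ needs the freshly obtained $a < 2\beta$ recycled into a lower estimate for $b$, so that the upper and lower estimates for $b$ pinch $d$ down to $3$. After that, everything reduces to routine manipulation of powers of $\beta$, plus the one-line check of the corner case $\beta = 2$.
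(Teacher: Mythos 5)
Your argument is correct: both halves follow validly from the digit-count inequalities $\beta^{d-1}\le a+b<\beta^{d}$ and $\beta^{d-1}\le ab<\beta^{d}$, and the corner case $\beta=2$ is handled properly. The underlying input is the same as the paper's, but the execution differs in the second half. The paper compresses everything into the single inequality $ab<\beta(a+b)$ (from $ab<\beta^{d}\le\beta(a+b)$), solves it for $b$ to get $b<\frac{a\beta}{a-\beta}$, and observes that the right side is decreasing in $a$, so it is maximized at $a=\beta+1$, giving $b<\beta(\beta+1)$ at once; the bound $a<2\beta$ then falls out of $a\le b<\frac{a\beta}{a-\beta}$. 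Your derivation of $a<2\beta$ is essentially this same computation (your chain $\beta^{d}>ab\ge a\frac{a+b}{2}\ge\frac{a}{2}\beta^{d-1}$ is $ab<\beta(a+b)$ combined with $b\ge\frac{a+b}{2}$), but for the bound on $b$ you take a more roundabout route: you pinch the digit count $d$ down to $3$ (with a separate check for $\beta=2$, where $d=4$ must also be excluded by hand) and then bound $b<\beta^{d}/(\beta+1)$. That works, but it costs you the case split on $\beta$ and the explicit bookkeeping of $d$, both of which the paper's ``solve for $b$ and use monotonicity in $a$'' step avoids entirely; on the other hand, your version makes the size of $b$ (three digits for $\beta\ge3$) explicit, which is mildly informative even though the lemma does not ask for it.
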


\begin{proof}
Note that $\beta(a+b)$ has one more digit than $a+b$ in base $\beta$. The fact
that $a+b$ and $ab$ have the same number of digits implies that
$ab<\beta(a+b)$. Solving for $b$ gives
\[
b<\frac{a \beta}{a-\beta}.
\]
The right side is a decreasing function of $a$, so it is maximized when
$a=\beta+1$, which gives the inequality $b<\beta(\beta+1)$. Since $a \le b <
\frac{a \beta}{a-\beta}$, we can solve for $a$ to get $a < 2\beta$.
\end{proof}

We now refine the bound in the lemma and conclude there is no reversed-sum
product pair with large $a$.

\begin{proposition}
    Suppose that $a \le b$ is a reversed sum-product pair for the base
    $\beta$. Then $a < \beta$.
\end{proposition}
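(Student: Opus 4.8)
The plan is to assume, for contradiction, that $a \ge \beta$. The case $a = \beta$ is excluded because no member of a reversed sum-product pair can be divisible by $\beta$ (see the Conventions), so we may take $a > \beta$. Lemma~\ref{lem:dda} then gives $\beta + 1 \le a \le 2\beta - 1$ and $b \le \beta^2 + \beta - 1$. The first step is to pin down the number of base-$\beta$ digits. Since $a, b > \beta$ we have $ab > \beta^2$, so $ab$ has at least three digits; and
\[
a + b \ \le\ (2\beta-1) + (\beta^2 + \beta - 1) \ =\ \beta^2 + 3\beta - 2 \ <\ \beta^3
\]
for $\beta \ge 3$, so $a+b$ has at most three digits. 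As $a+b$ and $ab$ have the same number of digits, both have exactly three. (For $\beta = 2$ the lemma restricts to $a = 3$ and $b \in \{3,4,5\}$, and one checks directly that none of these is a reversed sum-product pair.)

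Write $a + b = c_2\beta^2 + c_1\beta + c_0$ with digits $0 \le c_i < \beta$ and $c_2 \ge 1$; the reversal hypothesis gives $ab = c_0\beta^2 + c_1\beta + c_2$, with $c_0 \ge 1$ since $ab$ also has three digits. I would next fold the two bounds on $a$ into a single inequality. From $a \le 2\beta - 1$ we get $b = (a+b) - a \ge (a+b) - (2\beta - 1)$, and from $a \ge \beta + 1$ we get $b = ab/a \le ab/(\beta+1)$. Combining these and clearing the positive denominator $\beta + 1$ yields
\[
(\beta+1)(a+b) - ab \ \le\ (\beta+1)(2\beta - 1) \ =\ 2\beta^2 + \beta - 1 ,
\]
and the rest of the argument shows this is impossible.

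The decisive step is to bound the left side from below using only the digit constraints. Substituting the base-$\beta$ expansions and regrouping terms,
\[
(\beta+1)(a+b) - ab \ =\ c_2\big(\beta^3 + \beta^2 - 1\big) \ -\ c_0\big(\beta^2 - \beta - 1\big) \ +\ c_1\beta^2 .
\]
For $\beta \ge 2$ both parenthesized factors are positive, so this expression is smallest when $c_1 = 0$, $c_2$ is as small as permitted ($c_2 = 1$), and $c_0$ is as large as permitted ($c_0 = \beta - 1$); the resulting minimum is
\[
(\beta^3 + \beta^2 - 1) - (\beta-1)(\beta^2 - \beta - 1) \ =\ 3\beta^2 - 2 .
\]
Since $3\beta^2 - 2 > 2\beta^2 + \beta - 1$ for all $\beta \ge 2$ — their difference being $\beta^2 - \beta - 1 \ge 1$ — this contradicts the previous display, and the proof is done.

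I expect the only fussy points to be bookkeeping: checking that the three-digit count is genuinely forced (which is exactly why $\beta = 2$ must be disposed of by hand) and organizing the identity for $(\beta+1)(a+b) - ab$ so that minimizing over $c_0, c_1, c_2$ is a one-line observation rather than a case split. It is worth noting that nothing in this argument requires analyzing $a$ and $b$ separately as the two roots of a quadratic; Lemma~\ref{lem:dda} supplies all the size information, and the reversal hypothesis enters only through the single algebraic identity above.
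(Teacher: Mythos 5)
Your proof is correct, but it takes a genuinely different route from the paper's. The paper also argues by contradiction from Lemma~\ref{lem:dda}, but it then expands $a = \beta + a_0$ and $b = b_1\beta + b_0$ in digits, splits into cases according to whether $a+b$ has two or three digits (with further subcases in the three-digit case, including a near-miss $a = \beta+1$, $b = \beta^2 - \beta - 1$ that is only excluded because $a+b = \beta^2$ has a trailing zero), and disposes of $\beta \le 5$ by exhaustive search. You instead expand $a+b$ in digits and use the reversal hypothesis to write $ab = c_0\beta^2 + c_1\beta + c_2$ directly, which collapses everything into the single inequality
\[
(\beta+1)(a+b) - ab \ \le\ (\beta+1)(2\beta-1),
\]
contradicted by a monotone minimization of $c_2(\beta^3+\beta^2-1) - c_0(\beta^2-\beta-1) + c_1\beta^2$ over the digit box. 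This buys uniformity: your argument works for all $\beta \ge 3$ at once, with only $\beta = 2$ checked by hand, and it has no subcases; your opening observation that $ab \ge (\beta+1)^2 > \beta^2$ forces three digits also shows that the paper's two-digit case is in fact vacuous. Two small points in your favor worth keeping: you explicitly dispose of $a = \beta$ via the divisibility remark in the Conventions (the paper passes from $a \ge \beta$ to $a > \beta$ tacitly), and your lower bound never needs $c_0 \ge 1$, only $c_0 \le \beta - 1$, so the no-leading-zero subtlety that the paper must invoke at the end plays no role. The one step a referee would want spelled out --- that the minimization over the box $1 \le c_2 \le \beta-1$, $0 \le c_1 \le \beta-1$, $1 \le c_0 \le \beta-1$ is legitimate even though the $c_i$ are further constrained by the actual values of $a+b$ and $ab$ --- is fine, since minimizing over a larger set only weakens the lower bound, and $3\beta^2 - 2 > 2\beta^2 + \beta - 1$ still closes the argument.
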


\begin{proof}
  Suppose for the sake of a contradiction that $a > \beta$. By
  Lemma~\ref{lem:dda}, we have $a < 2\beta$ and $b < \beta(\beta+1)$. For $\beta
  \le 5$, we can examine all $a \in (\beta,2\beta)$ and $b \in
      [a,\beta(\beta+1))$ and find there is no reversed sum-product pair with
        these constraints.

  For the remainder of the proof, we may assume that
  \[
    \beta < a < 2\beta, \quad a \le b < \beta(\beta+1), \quad \beta \ge 6. 
    \]
  With these assumptions, we find that $a+b < 3\beta + \beta^2$, so that $a+b$
  has 2 or 3 digits.  We write
  \begin{equation}
    \label{eq:a_and_b}
    a = \beta + a_0 \qquad \text{and} \qquad b = b_2 \beta^2 + b_1 \beta + b_0,
  \end{equation}
    where $0 \le a_i,b_i < \beta$ and $a_0b_0 \ne 0$. Note that if $b_2 \ne 0$,
    then $ab$ has 4 digits. So we may further assume that $b_2 = 0$. As $a \le
    b$, it follows that $b_1 \ge 1$.

    \medskip
    
  \noindent \textbf{Case $a+b$ has 2 digits.} From \eqref{eq:a_and_b}, we have
  \[
  a+b = (1+b_1)\beta + (a_0 + b_0) ,
  \]
  so the first digit of $a+b$ is at least~2. Thus,
  \[
  \frac{\beta}{2}(a+b) \ge \beta^2 > ab,
  \]
  since $ab$ must also have $2$ digits. Solving for $a$ gives
  \[
   a < \frac{b\beta}{2b - \beta}.
   \]
  For $b > \beta$ the right side is a decreasing function, so it is maximized
  by taking $b = \beta + 1$. We then have
  \[
    a < \frac{b\beta}{2b - \beta} \le \beta \frac{\beta+1}{\beta+2} < \beta,
    \]
    a contradiction.

    \medskip    
   
  \noindent \textbf{Case $a+b$ has 3 digits.} From \eqref{eq:a_and_b}, we have
  \[
      a + b = (1+b_1)\beta + (a_0 + b_0). 
  \]
  As we are assuming $a+b$ has 3 digits, we have two subcases to consider:
  \begin{itemize}
  \item[(I)] $1+b_1 \ge \beta$, or
    \smallskip
  \item[(II)] $1+b_1 = \beta - 1$ and $a_0 + b_0 \ge \beta$.
  \end{itemize}
  In both (I) and (II), \eqref{eq:a_and_b} shows that the product $ab$ satisfies
  \begin{equation}
    \label{eq:3digits}
    ab = b_1 \beta^2 + (a_0b_1 + b_0)\beta  + a_0b_0  
  \end{equation}

  In case (I), we must have $b_1 = \beta - 1$. Using the fact that $a_0,b_0 \ge
  1$, we obtain the following estimate from \eqref{eq:3digits}:
  \[
    ab \ge \beta^3 + 1.
    \]
  But then $ab$ has at least 4 digits, a contradiction

  In case (II), we look at the coefficient on $\beta$ in \eqref{eq:3digits}:
  \[
    a_0b_1 + b_0 = a_0(\beta-2) + b_0 \ge a_0(\beta - 2) + \beta-a_0 =
    a_0(\beta-3) + \beta.
    \]
  This is an increasing function of $a_0$. If $a_0 \ge 2$, then this quantity is
  at least $2\beta$ since $\beta \ge 6$. As in case (I), we obtain the absurd
  conclusion that $ab$ has 4 digits. So $a_0 = 1$ and $b_0 = \beta - 1$. This
  completely nails down $a$ and $b$:
  \[
    a = \beta + 1 \qquad \text{and} \qquad b = (\beta-2)\beta + (\beta-1). 
    \]
  As $a + b = \beta^2$, we do not obtain a reversed sum-product pair. This
  completes the proof. 
\end{proof}


\section{Carries}
\label{sec:prelims}

From grade school arithmetic, we know about ``carrying'' when computing
multi-digit sums and products. Our primary goal for this section is to show that
there is no carry when computing the sum $a+b$ for a reversed sum-product pair
$(a,b)$. (Typically there are carries in the product.) We begin with a careful
definition of carry digits and a bound on how big they can be.

Suppose that $a, d$ are two single-digit numbers, which means $0 \le a,d <
\beta$. Their sum or product is at most two digits. If it is two digits, we call
the leading digit the \textbf{carry digit}. Upon adding single-digit numbers,
the resulting carry digit is at most~1: $a + d < 2\beta$. If $a \ge 1$, then
multiplying these two numbers produces a carry digit that is strictly less than
$a$:
\[
ad \le a(\beta-1) = (a-1)\beta + (\beta - a). 
\]
Now imagine that we are multiplying a single-digit number $a$ by a multi-digit
number $b$. A particular digit of the product $ab$ comes from multiplying $a$ by
a single digit of $b$ and adding the previous carry digit. The new
carry digit is still at most $a-1$:
\[
ad + \text{previous carry} \le a(\beta-1) + (a-1) = (a-1)\beta + (\beta-1).
\]

Next we show that the first digit of $a+b$ does not arise from a carry.

\begin{lemma}
  \label{lem:same_length}
If $(a,b)$ is a reversed sum-product pair for the base $\beta$ with $a < \beta <
b$, then $b$ and $a+b$ have the same number of digits.
\end{lemma}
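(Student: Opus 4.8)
The plan is to argue by contradiction, exploiting the rigidity of the relation ``$ab$ is the digit-reversal of $a+b$'' once $a+b$ is forced into a very special shape. Write $b$ with $r+1$ base-$\beta$ digits, so that $\beta^r \le b < \beta^{r+1}$; since $b > \beta$ we have $r \ge 1$. Because $1 \le a < \beta$, we get $\beta^r \le b \le a+b < \beta^{r+1} + \beta < \beta^{r+2}$, so $a+b$ has either $r+1$ or $r+2$ digits. Assume for the sake of a contradiction that it has $r+2$ digits. Then $a+b = \beta^{r+1} + c$ for some integer $0 \le c < \beta$, i.e., the base-$\beta$ expansion of $a+b$ is $1\,0\cdots0\,c$ with $r$ zeros in between.

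The first thing I would check is that $c \ne 0$: if $c = 0$, then the digit-reversal of $a+b$ has a leading zero, hence strictly fewer than $r+2$ digits, contradicting the definition of a reversed sum-product pair. So $1 \le c < \beta$, and reversing the digits of $a+b$ gives $ab = c\beta^{r+1} + 1$. Now I would eliminate $b$: substituting $b = \beta^{r+1} + c - a$ into $ab = c\beta^{r+1} + 1$ yields
\[
a(\beta^{r+1} + c - a) = c\beta^{r+1} + 1 \quad\Longrightarrow\quad (a-c)(\beta^{r+1} - a) = 1.
\]
Since $a$ and $c$ are integers and $r \ge 1$, we have $\beta^{r+1} - a \ge \beta^2 - (\beta-1) \ge 3 > 1$. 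Hence the right-hand factorization is impossible: if $a - c \le 0$ the product is nonpositive, and if $a - c \ge 1$ it is at least $3$. This contradiction shows that $a+b$ cannot have $r+2$ digits, so it has exactly $r+1$ digits, the same number as $b$.

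I do not anticipate a genuine obstacle here; the proof is a short elimination. The only points needing care are the bookkeeping that $a+b$ has at most $r+2$ digits, the exclusion of the degenerate value $c=0$ by invoking the equal-length requirement built into the definition of a reversed sum-product pair, and the observation that $b > \beta$ forces $r \ge 1$, which is what makes $\beta^{r+1}-a$ comfortably larger than $1$. The resulting identity $(a-c)(\beta^{r+1}-a) = 1$ is a close cousin of the identities $(a-1)(b-1)=1$ and $(\beta-a)(\beta-b)=1$ encountered in Section~\ref{sec:single-digit}.
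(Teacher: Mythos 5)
Your proof is correct and follows essentially the same route as the paper's: assume $a+b$ gains a digit, write $a+b=\beta^{r+1}+c$ so that reversal forces $ab=c\beta^{r+1}+1$, and derive an impossible factorization --- your identity $(a-c)(\beta^{r+1}-a)=1$ is the paper's $c(\beta^\ell-a)=1$ after the substitution $c\mapsto a-c$, $\ell=r+1$. Your explicit exclusion of the degenerate case $c=0$ is a nice touch that the paper's version handles only implicitly via the assumption $a>c\ge 1$.
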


\begin{proof}
Assume the number of digits differs. Then $a+b$ has one more digit than
$b$, and $b = \beta^\ell - c$ for some $\ell\ge 2$ and $c$ with $a > c \ge 1$.
Then
\[
a+b=\beta^\ell+(a-c) \qquad \text{and} \qquad ab = a \beta^\ell - ac.
\]

The above expression for $a+b$ has first digit $1$, last digit $a-c$, and all
other digits $0$. Therefore, the reverse is true for $ab$:
\[
ab=(a-c)\beta^\ell+1.
\]
Combining these two expressions for $ab$ and rearranging, we get
$c(\beta^\ell-a)=1$. Since $\beta^\ell\ge\beta^2>a+1$, this is a
contradiction.
\end{proof}


Now we improve the preceding lemma to show that the computation of $a+b$
involves no carry at all.

\begin{proposition}
  \label{prop:no_carry}
  If $(a,b)$ is a reversed sum-product pair for the base~$\beta$ with $a < \beta
  < b$, the last digit of $b$ is strictly smaller than $\beta - a$.
\end{proposition}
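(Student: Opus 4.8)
The plan is to prove the statement directly by contradiction, phrasing it as ``there is no carry out of the units place when adding $a$ to $b$.'' Suppose $b$ has last digit $b_0$ and that $a + b_0 \ge \beta$, so a carry \emph{is} produced. Then the last digit of $a+b$ equals $a + b_0 - \beta$, and since $b_0 \le \beta - 1$ this quantity is at most $a - 1$. I will derive a contradiction by observing that, because $ab$ is the digit-reversal of $a+b$, this small number is forced to be the \emph{leading} digit of $ab$ — and the leading digit of $ab$ is too large for that.

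To make this precise, I first fix notation: write $b = b_\ell \beta^\ell + \cdots + b_1\beta + b_0$ with $b_\ell \ne 0$, so that $b$ has $\ell + 1$ digits. By Lemma~\ref{lem:same_length}, $a+b$ also has exactly $\ell+1$ digits; since $(a,b)$ is a reversed sum-product pair, $ab$ has $\ell+1$ digits as well, so its leading digit is $\lfloor ab/\beta^\ell \rfloor$. From $b \ge b_\ell \beta^\ell$ I get $ab \ge a b_\ell \beta^\ell$, hence the leading digit of $ab$ satisfies $\lfloor ab/\beta^\ell \rfloor \ge a b_\ell \ge a$, using $a \ge 1$ and $b_\ell \ge 1$. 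On the other hand, reversal identifies the leading digit of $ab$ with the last digit of $a+b$, which under the assumed carry is at most $a-1$. This contradiction shows $a + b_0 < \beta$, i.e., $b_0 < \beta - a$.

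I do not expect a genuine obstacle: once the digit counts are lined up, the whole argument collapses to the single inequality ``the leading digit of $ab$ is at least $a$.'' The one point that needs care is the invocation of Lemma~\ref{lem:same_length} — without knowing that $a+b$ (hence $ab$) has exactly $\ell + 1$ digits, a carry could propagate through all of $b$ and the identification of the leading digit of $ab$ with $\lfloor ab/\beta^\ell\rfloor$ would fail. It is also worth remarking in the proof that, since $a < \beta$, the conclusion $b_0 < \beta - a$ is equivalent to the assertion that the computation of $a+b$ involves no carry at all: if the units place produces no carry, then every higher digit of $b$ is simply copied unchanged into $a+b$.
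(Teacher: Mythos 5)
Your proof is correct and is essentially the paper's own argument in contrapositive form: both rest on Lemma~\ref{lem:same_length} to equalize the digit counts of $b$, $a+b$, and $ab$, and on the observation that the leading digit of $ab$ is at least $a$ while reversal forces it to equal the last digit of $a+b$. The only cosmetic differences are that you bound the leading digit via $ab \ge a b_\ell \beta^\ell$ rather than writing it as $a b_r + \lambda$ with a carry $\lambda \le a-1$, and that you argue by contradiction with an assumed carry instead of solving directly for $b_0 = d - a < \beta - a$.
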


\begin{proof}
  Write $b_r$ for the first digit of $b$, and let $d$ be the first digit of
  $ab$. Since $ab$ has the same number of digits as $a+b$, which has the same
  number of digits as $b$ (Lemma~\ref{lem:same_length}), we see that $d = a b_r
  + \lambda < \beta$, where $\lambda \le a-1$ is the carry from the $(r-1)$-st place of
  the product $ab$. It follows that $d \ge a b_r \ge a$.

  Write $b_0$ for the last digit of $b$. Since $a \le b$ is a reversed
  sum-product pair, the last digit of $a+b$ must be $d \equiv a + b_0
  \pmod{\beta}$. That is, $b_0 \equiv d - a \pmod{\beta}$. Since $d \ge a$, we
  conclude that $b_0 = d - a < \beta - a$.
\end{proof}

In particular, the above proposition shows that $a+b$ and $b$ have the same
first digit, a fact we will capitalize on in the next section.






\section{\texorpdfstring{Small $a$, Large $b$}{Small a, Large b}}
\label{sec:algorithm}

Suppose that $(a,b)$ is a reversed sum-product pair for the base $\beta$, and that $a
< \beta < b$. Write the base-$\beta$ expansion of $b$ as
\[
b = b_r \beta^r + \cdots + b_0.
\]
We begin by determining necessary --- though not sufficient --- conditions on
$b_0, b_r$. From there, we will inductively determine 2 more digits (which may
be the same digit if $b$ has an odd number of digits), and so on. With careful
bookkeeping, this procedure will result in only finitely many states, and we
will be able to develop an algorithm for finding all valid reversed sum-product
pairs.


\subsection{The Recursion}
\label{sec:recursion}

To recap, we have now determined that if $(a,b)$ is a reversed sum-product
pair for the base $\beta$ with $a < \beta < b$, then
\begin{itemize}
\item $b$ and $a+b$ and $ab$ have the same number of digits;
  \smallskip
\item $b$ and $a+b$ have the same first digit; and
  \smallskip
\item the last digit of $b$ is strictly smaller than $\beta -a$.
\end{itemize}
Recall that we write $b_r, b_0$ for the first and last digits of $b$,
respectively.  Then we have $0 < b_0 < \beta - a$, and the last digit of $a+b$
is $a + b_0$.  So the first digit of $ab$ is $ab_r + \lambda = a + b_0$ for some
$0 \le \lambda < a$ corresponding to the carry from the $(r-1)$-st place of the
product. (Here, $\lambda$ stands for ``left'' carry.)  The first digit of $a+b$
agrees with $b_r$. But this is also equal to the last digit of $ab$, so we have
$b_r \equiv ab_0 \pmod{\beta}$. Writing $\rho$ for the carry from the units
place --- the ``right'' carry --- we obtain the following constraints on the
first and last digits of $b$:
\begin{align}
  \label{eq:recursion_start}
a + b_0 &= ab_r + \lambda   \text{ for some $0 \le \lambda < a$} \\
   ab_0 &= b_r + \rho \beta \text{ for some $0 \le \rho < a$} \notag
\end{align}

Now suppose that we have determined the first and last $n$ digits of $b$ for
some $n > 0$. Write $b = \cdots xx' \cdots y'y \cdots$, where $x,y$ have already
been determined and we would like to find $x'$ and $y'$. Assume further that we
already know the carry into the $x$-column of the product $ab$ --- let us call
it $\lambda$. (This will be part of our inductive information.) The product of
$a$ with the rightmost $n$ digits of $b$ determines a carry out of the
$y$-column of $ab$ --- call it $\rho$, so that the product will
have $ay' + \rho \pmod{\beta}$ in the next place to the left. See
Figure~\ref{fig:carries}(i). Since $(a,b)$ is a reversed sum-product pair, and
since the corresponding digit of $a+b$ is $x'$, we find that
\[
ay' + \rho =  x' + \rho' \beta \text{ for some $0 \le \rho' < a$}.
\]
To determine the digit of the product $ab$ arising from multiplication by $x'$,
we need to consider the unknown carry from the middle digits --- call it
$\lambda'$. See Figure~\ref{fig:carries}(ii). The result is $ax' + \lambda'$,
which must agree modulo $\beta$ with the corresponding digit of $a+b$, namely
$y'$. That is, 
\[
ax' + \lambda' = y' + \lambda \beta \text{ for some $0 \le \lambda' < a$}.
\]
We combine the recursion equations for future reference:
\begin{align}
  \label{eq:recursion}
    ax' + \lambda' &= y' + \lambda \beta \text{ for some $0 \le \lambda' < a$}\\
    ay' + \rho &= x' + \rho' \beta \text{ for some $0 \le \rho' < a$} \notag
\end{align}
\begin{figure}[htb]
  \begin{tikzpicture}
  \node at (0,0) {$\cdots \ x \ \cdots \ y \ \cdots$};
  \node at (-0.4,.5) {$\stackrel{\lambda}{\curvearrowleft}$};
  \node at (0.3,.5) {$\stackrel{\rho}{\curvearrowleft}$};  
  \node at (0,-0.75) {(i)};
  
  \node at (5,0) {$\cdots \ x\ x' \ \cdots \ y' \ y \ \cdots$};
  \node at (4.15,.5) {$\stackrel{\lambda}{\curvearrowleft}$};
  \node at (4.6,.5) {$\stackrel{\lambda'}{\curvearrowleft}$};  
  \node at (5.8,.5) {$\stackrel{\rho}{\curvearrowleft}$};
  \node at (5.35,.5) {$\stackrel{\rho'}{\curvearrowleft}$};    
  \node at (5,-0.75) {(ii)};  
\end{tikzpicture}
  \caption{An illustration of the carries involved in the beginning and end of
    the recursion step.}
  \label{fig:carries}
\end{figure}

There are two ways for this construction to terminate: when the left and right
sides may be concatenated, or when the left and right sides overlap in a
digit. Suppose we know the first and last $n$ digits of $b$ for some $n \ge
1$. Write $b = \cdots x \cdots y \cdots$, where $x,y$ have already been
determined, and suppose that we know the carry $\lambda$ into the $x$-column of
the product $ab$ and the carry $\rho$ out of the $y$-column in the product. The
left and right sides may be concatenated if the carries are compatible --- i.e.,
if $\lambda = \rho$. In that case, $a$ and $b = \cdots xy \cdots$ are a reversed
sum-product pair.

To understand when the left and right sides may overlap in a digit, we must run
the recursion one more step. With the setup of the previous paragraph, we solve
the recursion equations \eqref{eq:recursion} to obtain
$x',y',\lambda',\rho'$. If $x' = y'$, then we claim that $\lambda' = \rho$ and
$\lambda = \rho'$. To see it, set $y' = x'$ in \eqref{eq:recursion} and subtract
the two equations. We obtain
\[
 \lambda' - \rho = (\lambda - \rho')\beta.
 \]
Since $|\lambda' - \rho| < a < \beta$, we must have $\lambda' = \rho$ and
$\lambda = \rho'$, as desired. It follows that the carries match up so that $a$
and $b = \cdots xx'y\cdots$ are a reversed sum-product pair.

To summarize, we have shown that the above procedure can terminate in two ways:
\begin{itemize}
\item if $\lambda = \rho$ at any step, or
   \smallskip
\item if $x' = y'$ in the recursion step. 
\end{itemize}


\subsection{\texorpdfstring{The Case $\beta = 10$}{The Case base = 10}}
\label{sec:base10}

Using the strategy from the preceding section, we complete the promised
description of all reversed sum-product pairs for the base~10:

\begin{theorem}
  If $a \le b$ is a reversed sum-product pair for the base~10, then $(a,b)$ is
  among
\[
(2,2), (9,9), (3,24), (2,47), (2,497), (2,4997), (2,49997), \ldots
\]  
\end{theorem}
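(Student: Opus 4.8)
The plan is to split the analysis according to the three size regimes already set up in the paper. For $a \le b < \beta = 10$, Section~\ref{sec:single-digit} already tells us the only possibilities are $(2,2)$ (which requires $\beta \ge 5$, satisfied) and $(\beta-1,\beta-1) = (9,9)$. The Proposition in Section~\ref{sec:large-a} rules out every pair with $a > \beta$, so no pair has $a \ge 10$. This reduces the theorem to the single remaining case $a < 10 < b$, where the recursion machinery of Section~\ref{sec:algorithm} applies.

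For the case $a < 10 < b$, first I would use Proposition~\ref{prop:no_carry}: the last digit $b_0$ of $b$ satisfies $0 < b_0 < 10 - a$. In particular $a \le 8$, and for each admissible $a \in \{2,\ldots,8\}$ (noting $a=1$ is excluded since then $a-1=0$ but really because $ab = b$ cannot be the nonzero-reverse unless trivial — more carefully, one checks $a=1$ directly gives no solution since $a+b = 1+b$ and $ab = b$ have reversed digits only in degenerate cases) I would enumerate the finitely many candidate pairs $(b_r, b_0)$ for the first and last digits of $b$ by solving the starting constraints~\eqref{eq:recursion_start}:
\[
a + b_0 = ab_r + \lambda \ (0 \le \lambda < a), \qquad ab_0 = b_r + \rho\beta \ (0 \le \rho < a).
\]
This is a small finite search. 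For each surviving $(b_r,b_0,\lambda,\rho)$ I would run the recursion~\eqref{eq:recursion}, which at each step is \emph{forced}: given $(x,y,\lambda,\rho)$ the next digits $x',y'$ and carries $\lambda',\rho'$ are uniquely determined by reducing mod $\beta$ and reading off the quotient (using $0 \le \lambda',\rho' < a \le 8 < \beta$ to guarantee uniqueness). The recursion terminates successfully when $\lambda = \rho$ (concatenation) or $x' = y'$ (overlap), per the summary at the end of Section~\ref{sec:recursion}.

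The heart of the argument — and the main obstacle — is to show that this forced recursion either terminates quickly or enters a periodic loop, and to identify exactly which loops produce genuine pairs. Since there are only finitely many states $(x,y,\lambda,\rho)$ with $0 \le x,y < \beta$ and $0 \le \lambda,\rho < a$, the recursion is eventually periodic; the work is to carry it out for each $a \in \{2,\ldots,8\}$ and check which runs reach a valid termination. I expect that for $a = 2$ with the branch $(b_r,b_0) = (4,7)$ the recursion cycles with period giving the digit block $99$, yielding the infinite family $47, 497, 4997, \ldots$ together with the odd-length members $49\cdots97$ from the overlap case $x' = y'$; this is precisely the DFA in Figure~\ref{fig:first_dfa}. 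For every other choice of $a$ and every other starting branch, I would verify that the recursion either hits an immediate contradiction (no valid $(b_r,b_0)$), terminates at a pair already on the list (e.g. reproducing $(3,24)$), or runs into a state from which $\lambda \ne \rho$ forever and $x' \ne y'$ forever with no new accepting configuration — so no further pairs arise. Assembling these finitely many finite computations completes the proof.
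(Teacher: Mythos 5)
Your overall route is the same as the paper's: dispose of $b < 10$ via Section~\ref{sec:single-digit} and of $a > 10$ via Section~\ref{sec:large-a}, then, for $a < 10 < b$, enumerate the solutions of \eqref{eq:recursion_start} and iterate \eqref{eq:recursion} with the two termination conditions. The one genuine flaw is your claim that the recursion step is ``forced,'' i.e.\ that $(x',y',\lambda',\rho')$ is \emph{uniquely} determined from the current carries by ``reducing mod $\beta$ and reading off the quotient.'' That justification is not valid: in \eqref{eq:recursion} both digits $x'$ and $y'$ are unknown and coupled (eliminating $y'$ gives $(a^2-1)x' = (a\lambda+\rho')\beta - \rho - a\lambda'$, where $\lambda'$ and $\rho'$ are also unknown), so for a given pair of carries $(\lambda,\rho)$ the system can have zero, one, or several solutions, and the search can branch. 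This is precisely why the paper packages this case as a DFA rather than as a single forced sequence. If you literally follow one path from each starting tuple, you have no a priori guarantee of having found every $b$.

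For base $10$ the damage is contained, because the finite computations you defer to are exactly the paper's: \eqref{eq:recursion_start} has solutions only for $a=2$, namely $(b_r,b_0,\lambda,\rho)=(4,7,1,1)$, and $a=3$, namely $(2,4,1,1)$; for $a=3$ the recursion \eqref{eq:recursion} has no solution at all, giving only $b=24$; and for $a=2$ it has the single solution $(9,9,1,1)$, which repeats forever because \eqref{eq:recursion} depends only on the carries, yielding $47, 497, 4997, \ldots$ and nothing else. So your argument becomes complete once you restate the recursion step correctly: at each stage list \emph{all} solutions of \eqref{eq:recursion}, and record as a computed fact (not a structural one) that for $\beta=10$ each carry state has at most one successor. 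Two smaller points: your exclusion of $a=1$ should be tightened (with $a=1$ the carries are forced to be $0$, and \eqref{eq:recursion_start} gives $b_r=b_0+1$ together with $b_0=b_r$, a contradiction; alternatively, $b$ and $b+1$ cannot be reversals of one another since reversal preserves the residue modulo $9$), and the bound $a\le 8$ you extract from Proposition~\ref{prop:no_carry} is correct but unnecessary once the search over \eqref{eq:recursion_start} is run for every $a\le 9$.
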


Section~\ref{sec:single-digit} shows that $(2,2)$ and $(9,9)$ are the only
instances with $b < 10$. Section~\ref{sec:large-a} shows that any remaining
pair must have $a < 10 < b$.

For a given $a < 10$, we begin by looking for all 4-tuples
$(b_r,b_0,\lambda,\rho)$ satisfying \eqref{eq:recursion_start} with $0 < b_0 <
10 - a$ and $0 < b_r < 10$.
The result is given in Table~\ref{tab:recurse}.

\begin{table}[!ht]
\begin{tabular}{ccccc}\toprule
  $a$ & $b_r$ & $b_0$ & $\lambda$ & $\rho$ \\
  \midrule
  2 & 4 & 7 & 1 & 1 \\
  3 & 2 & 4 & 1 & 1 \\
  \bottomrule
\end{tabular}
\vspace{.2cm}
\caption{The solutions to \eqref{eq:recursion_start} for $\beta = 10$ with $0<b_r<10$ and $0 < b_0 <
10 - a$.}
\label{tab:recurse}
\end{table}

Let us look at $a = 3$ first. Any reversed sum-product pair $(a,b)$ must have $b
= 2 \cdots 4$ for some unknown (and possibly nonexistent) digits between the 2
and the 4.  The equations \eqref{eq:recursion} defining the recursion step have
no solution, as one can check with a short calculation. Since $\lambda = \rho =
1$ in the first step, we obtain $b = 24$ as the only solution with $a = 3$.


Next we look at $a = 2$. Any reversed sum-product pair $(a,b)$ must have $b = 4
\cdots 7$. Since $\lambda = \rho = 1$, we obtain a first solution $b = 47$. The
recursion equations \eqref{eq:recursion} have a single solution:
$(x',y',\lambda',\rho') = (9,9,1,1)$. Since $x' = y' = 9$, we obtain the value
$b = 497$. Since $\lambda' = \rho'$, we also obtain the value $b =
4997$. Finally, note that the recursion equations depend only on $\lambda,
\rho$; it follows that $(9,9,1,1)$ is the unique solution obtained by running
the recursion again. We obtain the solutions $49997$ and $499997$ from the next
step of the recursion, and so on. This completes the proof of the theorem.


\subsection{A Deterministic Finite Automaton}
\label{sec:DFA}
Fix a base $\beta \ge 2$ and an integer $1 \le a < \beta$. The procedure
described in Section~\ref{sec:recursion} suggests a method for constructing a
DFA, which we now carry out.  The allowable edge labels --- i.e., the
  ``alphabet'' in DFA theory --- are pairs $(x,y)$ with $0 \le x,y < \beta$ as well
as singletons $(x)$ for $0 \le x < \beta$. We construct three types of states:
\begin{itemize}
\item An initial state $s_i$;
  \smallskip
\item An ``odd state'' $s_o$; and
  \smallskip
\item A ``carry state'' $s_{\lambda,\rho}$ for each pair of integers $0 \le \lambda,\rho < a$. 
\end{itemize}
The accepting states are $\{s_o\} \cup \{s_{\lambda,\lambda} \ : \ 0 \le \lambda <
a\}$. The transitions for our DFA are as follows: 
\begin{itemize}
  
  \item For each solution $(b_r,b_0,\lambda,\rho)$ to
    \eqref{eq:recursion_start}, we have a transition from the initial state
    $s_i$ to the state $s_{\lambda,\rho}$ with label $(b_r,b_0)$.

  \smallskip
    
  \item For each carry state $s_{\lambda,\rho}$ and each solution
    $(x',y',\lambda',\rho')$ to the recursion equations \eqref{eq:recursion}, we
    have a transition from $s_{\lambda,\rho}$ to $s_{\lambda',\rho'}$ with label
    $(x',y')$. If $x' = y'$, we also include a transition from
    $s_{\lambda,\rho}$ to the odd state $s_o$ with label $(x')$.

  \smallskip    

  \item If $\beta \ge 5$ and $a = 2$, include a transition from $s_i$ to $s_o$
    with label $(2)$.

  \smallskip    

  \item If $\beta \ge 3$ and $a = \beta - 1$, include a transition from $s_i$ to
    $s_o$ with label $(\beta-1)$.
\end{itemize}
Write $A = A_{\beta,a}$ for the DFA thus constructed.

The definition of our DFA captures the digit construction involved in the
recursion in Section~\ref{sec:algorithm}. We formalize this in the following
statement, which is a more precise version of Theorem~\ref{thm:finite} from the
introduction.

\begin{theorem}
  For $\beta \ge 2$ and $a < \beta$, let $A = A_{\beta,a}$ be the DFA
  constructed above.
  \begin{itemize}
    \item Suppose that the string $(x_1,y_1)(x_2,y_2) \cdots (x_n,y_n)$ is
      accepted by $A$ for some $n \ge 1$. Then $a$ and $b = x_1x_2 \cdots
      x_ny_n\cdots y_2y_1$ are a reversed sum-product pair for the base $\beta$.

      \smallskip
      
    \item Suppose that the string  $(x_1,y_1)(x_2,y_2) \cdots (x_n,y_n)(z)$ is accepted by
      $A$ for some $n \ge 0$. Then $a$ and $b = x_1x_2 \cdots x_nzy_n\cdots
      y_2y_1$ are a reversed sum-product pair for the base $\beta$.
  \end{itemize}
  If $a \le b$ is a reversed sum-product pair for the base $\beta$, then $b$ can
  be constructed from a string accepted by $A$ in one of these two ways.
\end{theorem}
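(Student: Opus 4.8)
The plan is to prove the theorem in two directions, mirroring the structure of the DFA construction. For the forward direction, I would take an accepted string and show that the associated integer $b$ (read out-to-in) forms a reversed sum-product pair with $a$. The key observation is that a walk in $A$ decomposes as: an initial edge from $s_i$ carrying $(b_r,b_0)$ (or the special singleton edges when $a=2$ or $a=\beta-1$), followed by a sequence of recursion edges between carry states, terminating either at an accepting carry state $s_{\lambda,\lambda}$ (the ``concatenation'' ending) or at the odd state $s_o$ via a singleton edge $(x')$ with $x'=y'$ (the ``overlap'' ending). I would argue by induction on the number of recursion steps that after reading $n$ edges the partial digit pattern $b = \cdots x_1 x_2 \cdots x_n \,?\cdots?\, y_n \cdots y_2 y_1$ is consistent with the equations \eqref{eq:recursion_start} and \eqref{eq:recursion}, with $\lambda$ equal to the carry into the $x_n$-column of $ab$ and $\rho$ equal to the carry out of the $y_n$-column. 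The base case is exactly \eqref{eq:recursion_start}, which by construction holds for the first edge. The inductive step uses \eqref{eq:recursion} to verify that the newly revealed digits $x',y'$ of $b$ produce digits of $ab$ matching (in reversed position) the digits of $a+b$; here Proposition~\ref{prop:no_carry} and the no-carry observations from Section~\ref{sec:prelims} guarantee that the addition $a+b$ introduces no carry, so the digit of $a+b$ in the $x'$-column is simply $x'$ (and in the $y'$-column, after the units, is $y'$). When the walk reaches an accepting state, the termination analysis at the end of Section~\ref{sec:recursion} shows the carries match ($\lambda=\rho$ in the concatenation case; $\lambda'=\rho$ and $\lambda=\rho'$ in the overlap case), so the two halves of $b$ glue into a genuine integer whose product with... whose sum with $a$ is the reverse of $ab$.

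For the converse, I would start with an arbitrary reversed sum-product pair $a \le b$ base $\beta$. By Section~\ref{sec:single-digit}, if $b < \beta$ then $(a,b)$ is $(2,2)$ (needing $\beta \ge 5$) or $(\beta-1,\beta-1)$ (needing $\beta \ge 3$); in each case the corresponding special singleton edge $s_i \to s_o$ was added to $A$, so $b$ is constructed from the length-one accepted string $(2)$ or $(\beta-1)$. By the Proposition in Section~\ref{sec:large-a}, there is no pair with $a > \beta$, and $a = \beta$ is impossible since then $\beta \mid ab$. So the remaining case is $a < \beta < b$, where Lemma~\ref{lem:same_length} and Proposition~\ref{prop:no_carry} apply. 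I would then run the recursion in reverse: the first and last digits $b_r, b_0$ of $b$, together with the carries $\lambda$ (into the $(r-1)$-st place, i.e., the second-from-leading place of $ab$) and $\rho$ (out of the units place of $ab$), satisfy \eqref{eq:recursion_start} by the derivation in Section~\ref{sec:recursion}, so $A$ has an edge $s_i \to s_{\lambda,\rho}$ labeled $(b_r,b_0)$. Peeling off successive pairs of digits of $b$ from the outside in, each step's digits and carries satisfy \eqref{eq:recursion} — again this is exactly the content of Section~\ref{sec:recursion} — so each step corresponds to an available transition $s_{\lambda,\rho} \to s_{\lambda',\rho'}$ in $A$. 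This process must halt because $b$ has finitely many digits, and it can only halt in the two ways identified at the end of Section~\ref{sec:recursion}: either the unprocessed middle is empty and the current carries agree ($\lambda = \rho$, landing in the accepting state $s_{\lambda,\lambda}$), or the middle consists of a single digit $x'=y'$, which forces (as shown there) $\lambda' = \rho$ and $\lambda = \rho'$, so the singleton edge to $s_o$ is taken. Either way we have exhibited an accepted string producing $b$.

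The main obstacle, and the step I would spend the most care on, is the precise bookkeeping of which carry is which in the inductive step of the forward direction — keeping straight that $\lambda, \rho$ in a carry state $s_{\lambda,\rho}$ refer respectively to the carry \emph{into} the left block and the carry \emph{out of} the right block of the partially-determined product $ab$, and checking that the recursion equations \eqref{eq:recursion} are precisely the conditions that (a) the next product digit on the left equals the next digit of $a+b$ there, and (b) likewise on the right, given that $a+b$ has no carries. A subtlety worth flagging explicitly is the parity/overlap issue: when $b$ has an odd number of digits the middle digit is simultaneously an $x$-digit and a $y$-digit, and one must confirm that the digit of $ab$ in that single column is consistent with being read as a digit of $a+b$ from \emph{both} the left-accumulated and right-accumulated carries — this is exactly why the condition $x'=y' \Rightarrow \lambda'=\rho,\ \lambda=\rho'$ from Section~\ref{sec:recursion} is needed, and I would cite it verbatim rather than re-deriving it. Once the dictionary between ``walk in $A$'' and ``outside-in reconstruction of $b$'' is pinned down, both inclusions are essentially immediate from the results already proved in Sections~\ref{sec:single-digit}--\ref{sec:algorithm}.
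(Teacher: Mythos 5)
Your proposal is correct and is essentially the argument the paper intends: the paper gives no separate proof of this theorem, treating it as a formalization of the recursion and termination analysis of Section~\ref{sec:recursion} together with the results of Sections~\ref{sec:single-digit}--\ref{sec:prelims}, which is exactly what you spell out in both directions (forward induction along the walk, and outside-in peeling for the converse). One small repair: in the forward direction you cannot cite Proposition~\ref{prop:no_carry}, since it presupposes that $(a,b)$ is already a reversed sum-product pair; the no-carry property of the sum instead comes for free from the DFA construction, because admissible solutions of \eqref{eq:recursion_start} have $a+b_0=ab_r+\lambda<\beta$, i.e.\ $b_0<\beta-a$.
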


\textit{A priori}, the number of states in $A_{\beta,a}$ is $a^2 + 2$. In
practice, many of these states are unreachable by a path beginning at the
initial state. To avoid these superfluous states, we will always ``lazy
construct'' the DFA: beginning with solutions to \eqref{eq:recursion_start},
only construct states as needed to satisfy the recursion. Taking this approach
does not affect the set of strings accepted by the DFA. For example, if
\eqref{eq:recursion_start} has no solution, then only the initial state needs to
be constructed. See Figure~\ref{fig:base10DFAs} for the end result in base~10.

\begin{figure}[htb]
  \begin{center}

\begin{tikzpicture}
  \node at (1,-2.75) {$a = 2$};

  \draw [thick,blue] (0,0) circle (12pt);
  \draw [thick,blue] (2,0) circle (12pt);
  \draw [thick,blue] (2,0) circle (10pt);
  \draw [thick,blue] (2,-2) circle (12pt);
  \draw [thick,blue] (2,-2) circle (10pt);  
  \node at (0,0) {$s_i$};
  \node at (2,0) {$s_{1,1}$};
  \node at (2,-2) {$s_o$};

  \draw [thick,->] (-1,0) -- (-.5,0);
  \draw [thick,->] (.5,0) -- (1.5,0);
  \draw [thick,->] (2,-.5) -- (2,-1.5);
  \draw [thick,->] (0.35, -0.35) -- (1.65,-1.65);
  \draw [thick,->] (2.5,.25) .. controls (3.5,1) and (3.5,-1) .. (2.5,-.25);
  \node at (1,.25) {\footnotesize{$(4,7)$}};
  \node at (2.3,-1) {\footnotesize{$(9)$}};
  \node at (3.65,0) {\footnotesize{$(9,9)$}};
  \node at (0.65,-1.1) {\footnotesize{$(2)$}};

  
  \node at (7,-2.75) {$a = 3$};

  \draw [thick,blue] (6,-1) circle (12pt);
  \draw [thick,blue] (8,-1) circle (12pt);
  \draw [thick,blue] (8,-1) circle (10pt);
  \node at (6,-1) {$s_i$};
  \node at (8,-1) {$s_{1,1}$};  

  \draw [thick,->] (5,-1) -- (5.5,-1);
  \draw [thick,->] (6.5,-1) -- (7.5,-1);
  \node at (7,-.75) {\footnotesize{$(2,4)$}};  

  
  \node at (1,-5.25) {$a = 9$};

  \draw [thick,blue] (0,-4.5) circle (12pt);
  \draw [thick,blue] (2,-4.5) circle (12pt);
  \draw [thick,blue] (2,-4.5) circle (10pt);
  \node at (0,-4.5) {$s_i$};
  \node at (2,-4.5) {$s_{o}$};  

  \draw [thick,->] (-1,-4.5) -- (-.5,-4.5);
  \draw [thick,->] (.5,-4.5) -- (1.5,-4.5);
  \node at (1,-4.25) {\footnotesize{$(9)$}};

  
  \node at (7,-5.25) {$a = 1, 4, 5, 6, 7, 8$};

  \draw [thick,blue] (7,-4.5) circle (12pt);
  \node at (7,-4.5) {$s_i$};  

  \draw [thick,->] (6,-4.5) -- (6.5,-4.5);
  
\end{tikzpicture}
  \end{center}
  \caption{``Lazy constructed'' DFAs for the base~10.}
  \label{fig:base10DFAs}
\end{figure}
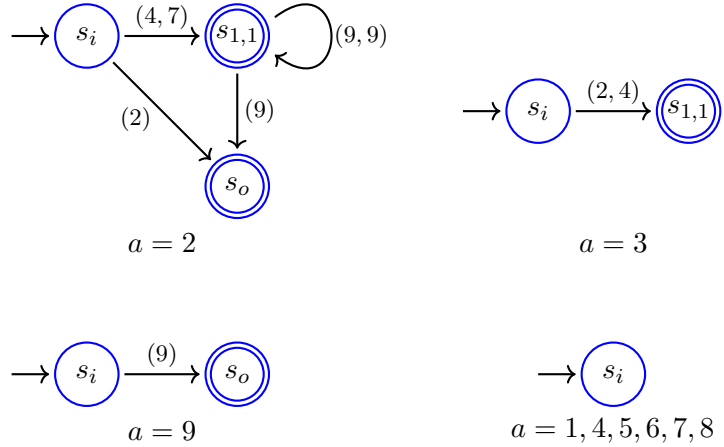

After constructing the DFA, we may find that there are states $s$ that do not
participate in any accepted string --- i.e., no path from the initial state to
an accepting state passes through $s$. We can ``trim'' all such $s$ from the
DFA. As an extreme example, the DFA $A_{150,31}$ has $13$ states accessible from
the initial state, but it has no accepting state. Consequently, we can trim all
states but the initial one. Said another way, there is no base-150 reversed
sum-product pair of the form $(31,b)$.

When constructing examples of these DFAs, one begins to marvel at how
complicated they can be. One way to measure this complexity is by the growth
of the number of strings accepted by $A_{\beta,a}$. Recall that the set of
strings accepted by a DFA is a regular language. Passing to a more general
setting for a moment, let $\Sigma$ be an alphabet and $R \subset \Sigma^*$ any
regular language. Define the growth function
\[
  p_R(n) = |R \cap \Sigma^n| = \text{ ``number of strings in $R$ of length $n$''}.
\]
In \cite{Szilard_et_al}, it is shown that $p_R(n) = O(n^k)$ for some $k \ge 0$
if and only if $R$ can be represented as a finite union of regular expressions
of the form $xy_1^*z_1\cdots y_k^* y_k$, where $x, y_i, z_i$ are fixed strings
in $\Sigma^*$. In essence, this means there are only finitely many repeated
patterns that can occur in the strings of $R$.

Figures~\ref{fig:first_dfa} and~\ref{fig:dfa18} are examples of DFAs with
$p_R(n) = O(1)$. We also have an example with exponential growth. Use the digits
$0, 1, 2, \ldots, 0, A, B, C, \ldots, Q$ to represent integers in
base~$27$. Running the DFA construction in Section~\ref{sec:DFA}, one verifies
that a state diagram for $A_{27,A}$ is given by Figure~\ref{fig:dfa27}. If $R$
is the regular language associated with this DFA, then a straightforward
inductive argument gives the number of accepted strings of a given length:
  \[
  p_R(4+5n) = 2^n \text{ for each $n \ge 0$}.
  \]
In particular, $p_R(n) \ne O(n^k)$  for any $k \ge 0$. 

\begin{figure}[htb]

\begin{tikzpicture}

\draw[thick,blue] (0.00,0.00) circle (12pt);  
\node at (0.00,0.00) {$s_i$};
\draw[thick,blue] (2.50,0.00) circle (12pt);  
\node at (2.50,0.00) {$s_{{1,4}}$};
\draw[thick,blue] (2.50,2.00) circle (12pt);  
\node at (2.50,2.00) {$s_{{5,3}}$};
\draw[thick,blue] (4.50,2.00) circle (12pt);  
\node at (4.50,2.00) {$s_{{6,0}}$};
\draw[thick,blue] (7.50,2.00) circle (12pt);  
\node at (7.50,2.00) {$s_{{0,6}}$};
\draw[thick,blue] (9.50,2.00) circle (12pt);  
\node at (9.50,2.00) {$s_{{3,5}}$};
\draw[thick,blue] (2.50,-2.00) circle (12pt);  
\node at (2.50,-2.00) {$s_{{7,8}}$};
\draw[thick,blue] (4.50,-2.00) circle (12pt);  
\node at (4.50,-2.00) {$s_{{9,2}}$};
\draw[thick,blue] (7.50,-2.00) circle (12pt);  
\node at (7.50,-2.00) {$s_{{2,9}}$};
\draw[thick,blue] (9.50,-2.00) circle (12pt);  
\node at (9.50,-2.00) {$s_{{8,7}}$};
\draw[thick,blue] (9.50,0.00) circle (12pt);  
\node at (9.50,0.00) {$s_{{4,1}}$};
\draw[thick,blue] (6.00,0.00) circle (12pt);  
\draw[thick,blue] (6.00,0.00) circle (10pt);  
\node at (6.00,0.00) {$s_o$};

\draw [thick,->] (-1.00,0.00) -- (-0.50,0.00); 
\draw [thick,->] (0.50,0.00) -- (2.00,0.00); 
\draw [thick,->] (3.00,2.00) -- (4.00,2.00); 
\draw [thick,->] (5.00,2.00) -- (7.00,2.00); 
\draw [thick,->] (8.00,2.00) -- (9.00,2.00); 
\draw [thick,->] (9.00,-2.00) -- (8.00,-2.00); 
\draw [thick,->] (7.00,-2.00) -- (5.00,-2.00); 
\draw [thick,->] (4.00,-2.00) -- (3.00,-2.00); 
\draw [thick,->] (7.50,-1.50) -- (7.50,1.50); 
\draw [thick,->] (4.50,1.50) -- (4.50,-1.50); 
\draw [thick,->] (2.50,0.50) -- (2.50,1.50); 
\draw [thick,->] (2.50,-1.50) -- (2.50,-0.50); 
\draw [thick,->] (9.50,1.50) -- (9.50,0.50); 
\draw [thick,->] (9.50,-0.50) -- (9.50,-1.50); 
\draw [thick,->] (7.20,-1.60) -- (6.30,-0.40); 
\draw [thick,->] (4.80,1.60) -- (5.70,0.40); 

\node at (-0.75,0.00) {\footnotesize{$$}};   
\node at (1.25,0.25) {\footnotesize{$(2,B)$}};   
\node at (3.50,2.25) {\footnotesize{$(D,1)$}};   
\node at (6.00,2.25) {\footnotesize{$(I,I)$}};   
\node at (8.50,2.25) {\footnotesize{$(1,D)$}};   
\node at (8.50,-2.25) {\footnotesize{$(O,Q)$}};   
\node at (6.00,-2.25) {\footnotesize{$(5,5)$}};   
\node at (3.50,-2.25) {\footnotesize{$(Q,O)$}};   
\node at (8.05,0.00) {\footnotesize{$(7,G)$}};   
\node at (4.00,0.00) {\footnotesize{$(G,7)$}};   
\node at (2.05,1.00) {\footnotesize{$(3,8)$}};   
\node at (1.95,-1.00) {\footnotesize{$(K,C)$}};   
\node at (9.95,1.00) {\footnotesize{$(8,3)$}};   
\node at (10.05,-1.00) {\footnotesize{$(C,K)$}};   
\node at (6.5,-1.25) {\footnotesize{$(5)$}};   
\node at (5.5,1.25) {\footnotesize{$(I)$}};   

\end{tikzpicture}
  \caption{A state diagram for the DFA $A_{27,A}$.}
  \label{fig:dfa27}
\end{figure}
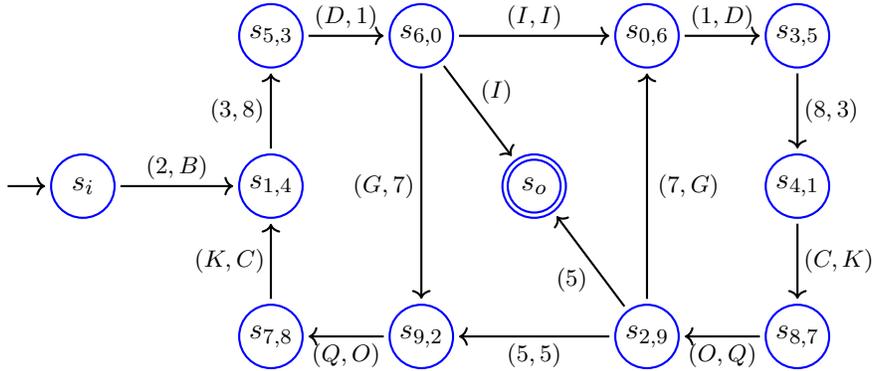

\begin{question}
  Is there a combinatorial description of the set of bases $\beta$ and integers
  $a < \beta$ such that $A_{\beta,a}$ has bounded growth function: $p_R(n) =
  O(1)$?
\end{question}

To close this section, we give an easy criterion for showing that $A_{\beta,a}$
has no accepting state. 

\newpage
\begin{proposition}
  Fix $\beta \ge 2$. If $a \le b$ is a reversed sum-product pair for the base
  $\beta$, then
  \[
  \gcd(\beta-1, a-1) = \gcd(\beta-1,b-1) = 1.
  \]
  In particular, the DFA $A_{\beta,a}$ has no accepting state if $\beta-1$ and
  $a-1$ share a common factor.
\end{proposition}

\begin{proof}
Suppose the expansion of $a+b$ in base $\beta$ is
\[
   a + b = n_r \beta^r + n_{r-1} \beta^{r-1} + \cdots + n_1 \beta + n_0.
\]
If $(a,b)$ is a reversed sum-product pair, then the expansion of $ab$ must be
\[
   ab = n_0 \beta^r + n_1 \beta^{r-1} + \cdots + n_{r-1} \beta + n_r.
\]
Reducing modulo $\beta - 1$ gives 
\[
  a + b  \equiv n_r + n_{r-1} + \cdots + n_1 + n_0 \equiv ab \pmod{\beta-1}.
\]
Massaging this congruence, we find that 
\[
  (a-1)(b-1) \equiv 1 \pmod{\beta-1}.  \qedhere
\]
\end{proof}


\section{An Opposite Problem}
\label{sec:participate}

Suppose that $(a,b)$ is a base-$\beta$ reversed sum-product pair with $b >
\beta$; this hypothesis rules out the uninteresting pairs $(2,2)$ and
$(\beta-1,\beta-1)$. We will say that $a$ \textbf{participates in a reversed
  sum-product pair} for the base $\beta$, or, for brevity,that $a$
\textbf{participates} for $\beta$. So $a=2$ and $a=3$ participate for the
base~10, but $a = 4$ and $a = 9$ do not.

\begin{example}
  \label{ex:participate}
For a given $a \ge 2$, we claim there are infinitely many bases $\beta$ for which $a$
participates. To see it, let $T$ be a variable. Define
\[
  b = (T+1)\beta + (aT+1) \qquad \text{and} \qquad \beta = (a^2-1)T + a-1.
\]
Then $(a,b)$ is a formal reversed sum-product pair for the base $\beta$ in the
sense that
\[
  a + b = (T+1)\beta + (aT + a + 1) \qquad \text{and} \qquad
  ab = (aT+a+1)\beta + (T+1).
\]
By setting $T$ equal to a positive integer, we get a reversed sum-product pair
in the usual sense unless $a = 2$ and $T = 1$ or $2$.
\end{example}

In the above example, we produced a set of bases $\beta$ for which $a$
participates; note that they all lie in the same congruence class modulo $a^2 -
1$. This is a general phenomenon: if $a$ participates for one base in a
congruence class, then it participates for all larger bases in the same class.

\begin{lemma}
  \label{lem:next_one}
  Let $a \ge 2$ be an integer, and suppose that $a$ participates for the base
  $\beta > a$. Then $a$ also participates for the base $\hat \beta = \beta +
  a^2 - 1$.
\end{lemma}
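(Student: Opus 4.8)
The plan is to convert a participation certificate for the base $\beta$ directly into one for $\hat\beta = \beta + a^2 - 1$, shifting the digits of $b$ according to the carries while leaving the carries themselves untouched. Because $\beta > a$, Sections~\ref{sec:single-digit} and~\ref{sec:large-a} force any interesting reversed sum-product pair $(a,b)$ for the base $\beta$ to satisfy $a < \beta < b$, so the apparatus of Section~\ref{sec:algorithm} applies: the digits $b_r,\dots,b_0$ of $b$ arise from an initial tuple $(b_r,b_0,\lambda,\rho)$ solving~\eqref{eq:recursion_start}, a chain of recursion tuples $(x',y',\lambda',\rho')$ solving~\eqref{eq:recursion}, and a termination either by $\lambda = \rho$ at some carry state or by an overlap $x' = y'$ in the last step. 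Equivalently, $b$ is read off an accepted string of $A_{\beta,a}$ other than the one-letter strings $(2)$ or $(\beta-1)$.

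First I would write down the candidate data over $\hat\beta$: keep every carry $\lambda,\rho,\lambda',\rho'$ unchanged, replace the initial digits by $\hat b_r = b_r + \rho$ and $\hat b_0 = b_0 + a\rho$, and in each recursion step issuing from the carry state $s_{\lambda,\rho}$ replace the produced digits by $\hat x' = x' + a\lambda + \rho'$ and $\hat y' = y' + \lambda + a\rho'$. These formulas are not guessed: demanding $a\hat x' + \lambda' = \hat y' + \lambda\hat\beta$ and $a\hat y' + \rho = \hat x' + \rho'\hat\beta$ and subtracting the corresponding $\beta$-equations yields the $2\times 2$ system $au - v = \lambda(a^2-1)$, $av - u = \rho'(a^2-1)$ for $u = \hat x' - x'$ and $v = \hat y' - y'$, whose unique solution is $(u,v) = (a\lambda + \rho',\ \lambda + a\rho')$; the same calculation for~\eqref{eq:recursion_start} gives the initial recipe. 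I would then verify, by substituting back, that the transformed tuples solve~\eqref{eq:recursion_start} and~\eqref{eq:recursion} over $\hat\beta$ — the mechanism being that the extra terms produced by $\hat\beta - \beta = a^2-1$ cancel precisely because every carry lies in $[0,a)$.

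It remains to check that the transformed digits are legal and that termination survives. Since $a\lambda + \rho' \le a(a-1)+(a-1) = a^2-1$ and likewise $\lambda + a\rho' \le a^2-1$, every transformed digit is at most $(\beta-1)+(a^2-1) = \hat\beta-1$; moreover $\hat b_0 = b_0 + a\rho < (\beta-a) + a(a-1) = \hat\beta - (2a-1) \le \hat\beta - a$, which preserves the no-carry condition of Proposition~\ref{prop:no_carry}, and $\hat b_r \ge b_r \ge 1$ keeps the leading digit nonzero. The carries are unchanged, so a certificate ending at an accepting state $s_{\mu,\mu}$ still does; and in the overlap case $x' = y'$ — which forces $\lambda = \rho'$ by the argument in Section~\ref{sec:recursion} — we get $\hat x' - \hat y' = (a-1)(\lambda - \rho') = 0$, so the overlap persists. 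Thus the transformed data is a genuine recursion solution for $\hat\beta$, and by the precise form of Theorem~\ref{thm:finite} it yields a reversed sum-product pair $(a,\hat b)$; as $\hat b \ge \hat\beta > 2$ and $\hat b \ne \hat\beta-1$, it is interesting, so $a$ participates for $\hat\beta$. The only genuine work is the two substitution verifications of the second paragraph; they are routine but demand care about which carry ($\lambda$ versus $\rho$, primed versus unprimed) multiplies which digit, and it is reassuring to test the recipe against the two-digit family of Example~\ref{ex:participate}, where $\rho = 1$ and the transformation reads $\hat b_r = b_r + 1$, $\hat b_0 = b_0 + a$.
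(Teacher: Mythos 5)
Your proposal is correct and follows essentially the same route as the paper's proof: the identical transformation $\hat b_r = b_r + \rho$, $\hat b_0 = b_0 + a\rho$, $\hat x' = x' + a\lambda + \rho'$, $\hat y' = y' + \lambda + a\rho'$ with carries unchanged, together with the same check that both termination conditions ($\lambda = \rho$, or $x' = y'$ using $\lambda = \rho'$) persist for $\hat\beta$. Your added verifications of the digit bounds and the derivation of the shift formulas are fine extras but do not change the argument.
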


\begin{proof}
  Let $b = b_r \beta^r + \cdots + b_0$ be such that $(a,b)$ is a reversed
  sum-product pair for the base $\beta$. We will take the corresponding
  solutions to the recursion equations \eqref{eq:recursion_start} and
  \eqref{eq:recursion} and construct new solutions for the base $\hat \beta$.

  The quadruple $(b_r, b_0, \lambda, \rho)$ satisfies the equations
  \eqref{eq:recursion_start}. Define
  \[
     \hat b_r = b_r + \rho \qquad \hat b_0 = b_0 + a \rho.
   \]
  Since $\rho < a$, we see that $\hat b_r < \hat \beta$ and $\hat b_0 < \hat
  \beta - a$. One verifies immediately that the quadruple $(\hat b_r, \hat b_0,
  \lambda, \rho)$ satisfies the equations \eqref{eq:recursion_start} with $\hat
  \beta$ in place of $\beta$. Note that the carries $\lambda, \rho$ did not
  change.

  Now suppose that the quadruple $(x',y',\lambda',\rho')$ satisfies
  \eqref{eq:recursion}. Define
  \[
    \hat x' = x' + \lambda a + \rho' \qquad \hat y' = y' + \rho' a + \lambda.
    \]
  Again, we see that $\hat x', \hat y' < \hat \beta$ and the quadruple $(\hat
  x', \hat y', \lambda', \rho')$ satisfies \eqref{eq:recursion} with $\hat
  \beta$ in place of $\beta$.

  To complete the proof, we will show that the termination conditions agree;
  that is, we get an integer $\hat b$ with the same number of digits as $b$, and
  such that $(a,\hat b)$ is a reversed sum-product pair for the base $\hat
  \beta$. If the recursion for $b$ terminates because $\lambda = \rho$, then
  clearly the same is true for $\hat b$ since we used all of the same
  carries. If instead the recursion terminates because $x' = y'$ at some stage,
  then we claim that $\hat x' = \hat y'$. Indeed, we saw in
  \S\ref{sec:recursion} that $x' = y'$ implies that $\lambda = \rho'$ and
  $\lambda' = \rho$. It follows that
  \[
    \hat x' = x' + \lambda a + \rho' = y' + \rho' a + \lambda = \hat y'.
  \]
  It follows that the recursion terminates for $\hat b$, as desired. 
\end{proof}

\bigskip

There is at least one congruence class that contains no base for which $a$
participates.

\bigskip

\begin{lemma}
  \label{lem:no_zero}
  Let $(a,b)$ be a reversed sum-product pair for the base
  $\beta$ with $b > \beta$. Then $\beta \not\equiv 0 \pmod{a^2-1}$.
\end{lemma}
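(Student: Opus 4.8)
The plan is to argue by contradiction: suppose $(a,b)$ is an interesting reversed sum-product pair for the base $\beta$ and that $\beta \equiv 0 \pmod{a^2-1}$, say $\beta = (a^2-1)k$ for some positive integer $k$. Since the pair is interesting, Section~\ref{sec:single-digit} and Section~\ref{sec:large-a} force $a < \beta < b$, so all of the structure from Section~\ref{sec:algorithm} applies: $b$ and $a+b$ and $ab$ have the same number of digits, $b$ and $a+b$ share a first digit, and the first/last digits $b_r, b_0$ together with carries $\lambda, \rho$ satisfy the initial recursion equations \eqref{eq:recursion_start}. I would start by extracting arithmetic consequences of \eqref{eq:recursion_start} modulo $a^2 - 1$. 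Adding $a$ times the second equation to the first (or an analogous combination) should eliminate $\beta$ up to a multiple of $a^2-1$: from $a + b_0 = a b_r + \lambda$ and $a b_0 = b_r + \rho\beta$ one gets, after multiplying the second by $a$ and substituting, a relation of the form $a^2 b_0 = a b_r + a\rho\beta$, hence $b_0 \equiv a b_r \pmod{a^2-1}$ using $a^2 \equiv 1$ and $\beta \equiv 0$; combined with the first equation this pins down $b_r$ and $b_0$ modulo $a^2 - 1$ in terms of $\lambda$ (and $\rho$).

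The next step is to understand what $\beta \equiv 0 \pmod{a^2-1}$ does to the whole number $b$, not just its extreme digits. The cleanest route is probably to run the congruence through the identities $a+b \equiv (\text{reverse of } ab)$ directly. Reducing modulo $a^2 - 1$ kills all the $\beta$-dependence in a base-$\beta$ expansion except the digit sum: if $n = \sum n_i \beta^i$ then $n \equiv \sum n_i \pmod{\beta - 1}$, but here the relevant modulus is $a^2 - 1$, and since $\beta \equiv 0 \pmod{a^2-1}$ we instead get $n \equiv n_0 \pmod{a^2-1}$ — only the last digit survives. Applying this to the identity "$ab$ is the digit-reverse of $a+b$": the last digit of $ab$ is $a b_0 \bmod \beta$, i.e. $ab \equiv a b_0 \pmod{a^2-1}$ (after accounting for the carry $\rho\beta \equiv 0$); and the last digit of $a+b$ is the first digit of $b$, namely $b_r$, so $a+b \equiv b_r \pmod{a^2-1}$. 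But also $ab \equiv a b_0$ and $a+b \equiv a + b \equiv a + b_r' $ where I must be careful: $a + b \equiv a + (\text{last digit of } b) = a + b_0$ is false in general — rather $b \equiv b_0$, so $a + b \equiv a + b_0 \pmod{a^2-1}$. Combining the two expressions for $a+b \bmod (a^2-1)$ gives $a + b_0 \equiv b_r \pmod{a^2-1}$, and likewise $ab \equiv a b_0 \equiv b_r \pmod{a^2-1}$ from the reverse. So $a b_0 \equiv a + b_0 \pmod{a^2-1}$, i.e. $(a-1)b_0 \equiv (a-1) \cdot 1 \cdot \frac{?}{}$... — more precisely $(a-1)(b_0 - 1) \equiv 0$; wait, $ab_0 - b_0 = (a-1)b_0$ and $a - 1$... let me just record $ab_0 - a - b_0 + 1 = (a-1)(b_0-1)$, so $(a-1)(b_0-1)\equiv 1 - (\text{something})$. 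I expect this to collapse to $(a-1)(b_0-1) \equiv 0 \pmod{a^2-1}$, hence $b_0 \equiv 1 \pmod{a+1}$.

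From there the endgame is to combine the constraint $b_0 \equiv 1 \pmod{a+1}$ (and the companion constraint on $b_r$) with the already-established bound $0 < b_0 < \beta - a$ from Proposition~\ref{prop:no_carry} and with $\gcd(a-1,\beta-1)=1$ (mentioned in the introduction) to derive a contradiction — most likely by showing the forced values of $b_0, b_r$ are incompatible with \eqref{eq:recursion_start} having a solution with $0 \le \lambda, \rho < a$, or that they force the "uninteresting" solution $b_0 = 1$, $b_r = a-1$ corresponding to $(a,b) = (\beta-1,\beta-1)$-type degeneracy. The main obstacle I anticipate is bookkeeping the carries correctly: the reductions "last digit of $ab$" and "last digit of $a+b$" both hide additive carry terms ($\rho\beta$ and the carry into the units place of $a+b$), and while each carry is a multiple of $\beta \equiv 0$, I need to be sure I'm reducing the actual integers $ab$ and $a+b$ and not just their digit strings. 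Once the carry accounting is pinned down, the congruence $b_0 \equiv 1 \pmod{a+1}$ should drop out and the contradiction with the interestingness (equivalently, with $a < \beta < b$ and the digit bounds) should be short.
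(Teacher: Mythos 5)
Your first paragraph already contains the paper's entire proof, but you stop one line short of the contradiction and then wander off. From \eqref{eq:recursion_start} you correctly derive $b_0 \equiv a b_r \pmod{a^2-1}$ when $\beta \equiv 0 \pmod{a^2-1}$. Now just feed this back into the first equation: since $a b_r \equiv b_0$, the relation $a + b_0 = a b_r + \lambda$ reduces to $a \equiv \lambda \pmod{a^2-1}$, which is impossible because $0 \le \lambda < a < a^2-1$. (Equivalently, as in the paper, eliminate $b_0$ to get $(a^2-1)b_r = a^2 - a\lambda + \rho\beta$ and reduce modulo $a^2-1$ to force $\lambda = a$.) The interesting hypothesis is used only to ensure $a < \beta < b$, so that $b$ has at least two digits and \eqref{eq:recursion_start} applies --- a point you did note.

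The route you actually pursue instead contains a concrete error and no completed endgame. You assert that the last digit of $a+b$ is $b_r$; in fact, by Proposition~\ref{prop:no_carry} the sum has digits $b_r, b_{r-1}, \ldots, b_1, b_0+a$, so its last digit is $a+b_0$ and its \emph{first} digit is $b_r$ ($b_r$ reappears as the last digit of $ab$, not of $a+b$). Hence reducing the integer $a+b$ modulo $a^2-1$ only reproduces $a+b \equiv a+b_0$, which is no new information, and your congruence $a+b_0 \equiv b_r \pmod{a^2-1}$ is unsupported. The subsequent chain is also off: even granting your premises, $ab_0 \equiv a+b_0$ would give $(a-1)(b_0-1) \equiv 1 \pmod{a^2-1}$, not $\equiv 0$, so the claimed conclusion $b_0 \equiv 1 \pmod{a+1}$ does not follow; and the final step, in which that congruence is supposed to clash with $0 < b_0 < \beta - a$ or with $\gcd(a-1,\beta-1)=1$, is only a plan (``I expect'', ``most likely'') with no argument --- nor is any contradiction evident from $b_0 \equiv 1 \pmod{a+1}$ alone. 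As written the proposal does not prove the lemma; the fix is simply to finish the modular computation you began in the first paragraph.
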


\medskip

\begin{proof}
  We can write $b = b_r \beta^r + \cdots + b_0$, with $r \ge 1$. Consequently,
  $b_r$ and $b_0$ must satisfy the equations \eqref{eq:recursion_start} for some
  choice of $\lambda, \rho$. Eliminating $b_0$ from \eqref{eq:recursion_start}
  shows that
  \[
     (a^2 - 1)b_r = a^2 - a\lambda + \rho \beta.
  \]
  If $\beta \equiv 0 \pmod{a^2-1}$, then reducing this equation modulo $a^2 - 1$
  yields $a^2 \equiv a \lambda \pmod{a^2-1}$. As $a$ is coprime to $a^2 - 1$, we
  conclude that $\lambda \equiv a \pmod{a^2-1}$. Since $a < a^2 - 1$, we must
  have $\lambda = a$, a contradiction.
\end{proof}

\bigskip

For integers $q,r$, let us agree to write $q\NN + r$ for the arithmetic
progression $\{qn + r \ : \ n = 0, 1, 2, \ldots \}$. 

\begin{theorem}
  \label{thm:participate}
  For $a \ge 2$, there exists a nonempty set of arithmetic progressions
  \[
    (a^2-1)\NN + v_1, \ (a^2-1)\NN + v_2, \ \ldots, \ (a^2-1)\NN + v_\ell
  \]
  such that
  \begin{itemize}
  \item $0 < v_i < a^2 - 1$ for all $i$;
    \smallskip
  \item If $a$ participates for $\beta$, then $\beta \in (a^2-1)\NN +
    v_i$ for some $i$; and
    \smallskip
  \item For each $i$ and each sufficiently large $\beta \in (a^2-1)\NN + v_i$,
    $a$ participates for $\beta$.
  \end{itemize}
\end{theorem}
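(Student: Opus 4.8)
The plan is to read the theorem off from Example~\ref{ex:participate} together with Lemmas~\ref{lem:next_one} and~\ref{lem:no_zero}; the genuine content lives in those three results, and what remains is bookkeeping about residues modulo $a^2-1$.

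Let $S = \{\beta : a \text{ participates for }\beta\}$, and define
\[
V = \bigl\{\, v \in \{0,1,\dots,a^2-2\} \ :\ \beta \equiv v \pmod{a^2-1}\ \text{for some}\ \beta \in S \,\bigr\}.
\]
I claim that $V$ is nonempty and finite and that its elements $v_1,\dots,v_\ell$ work. First, $V \neq \emptyset$: Example~\ref{ex:participate} produces, for every $a \ge 2$, infinitely many $\beta \in S$ of the form $\beta = (a^2-1)T + (a-1)$, so $a-1 \in V$, and $0 < a-1 < a^2-1$ because $a \ge 2$. Second, $0 \notin V$: if $\beta \in S$, then there is an interesting reversed sum-product pair $(a,b)$ for the base $\beta$, so Lemma~\ref{lem:no_zero} gives $\beta \not\equiv 0 \pmod{a^2-1}$. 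Hence every $v \in V$ satisfies $0 < v < a^2-1$ (the first bullet), and $V \subseteq \{1,\dots,a^2-2\}$ is finite.

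For the second bullet, suppose $a$ participates for $\beta$, i.e.\ $\beta \in S$. Then $v := \beta \bmod (a^2-1)$ lies in $V$ by construction; since $0 < v < a^2-1$ and $\beta \ge 1$ with $\beta \equiv v \pmod{a^2-1}$, we get $\beta \ge v$, so $\beta \in (a^2-1)\NN + v$. For the third bullet, fix $v \in V$ and set $\beta_v = \min\{\beta \in S : \beta \equiv v \pmod{a^2-1}\}$, which exists because this set is nonempty by the definition of $V$. As $\beta_v$ carries a reversed sum-product pair, the Proposition of Section~\ref{sec:large-a} gives $\beta_v > a$, so Lemma~\ref{lem:next_one} applies and $a$ participates for $\beta_v + (a^2-1)$; this base again exceeds $a$, so iterating shows that $a$ participates for every $\beta \equiv v \pmod{a^2-1}$ with $\beta \ge \beta_v$. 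Thus all sufficiently large elements of $(a^2-1)\NN + v$ lie in $S$, which is the third bullet.

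There is no real obstacle beyond keeping the definitions straight. The one subtlety worth flagging is the repeated appeal to the facts that ``$a$ participates for $\beta$'' automatically forces $\beta > a$ (so Lemma~\ref{lem:next_one} applies) and yields an \emph{interesting} pair (so Lemma~\ref{lem:no_zero} excludes the class $v=0$), together with the elementary point that a positive integer congruent to $v\in\{1,\dots,a^2-2\}$ modulo $a^2-1$ genuinely lies in $(a^2-1)\NN+v$ and not merely in its congruence class.
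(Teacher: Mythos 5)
Your proposal is correct and follows essentially the same route as the paper: take the residues modulo $a^2-1$ of the participating bases, use Example~\ref{ex:participate} for nonemptiness, Lemma~\ref{lem:no_zero} to exclude the residue $0$, and Lemma~\ref{lem:next_one} with induction from a minimal participating base in each class for the final bullet. Your explicit check that a participating base satisfies $\beta > a$ (so Lemma~\ref{lem:next_one} really applies) is a small point the paper leaves implicit, but otherwise the arguments coincide.
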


\begin{proof}
  Fix $a \ge 2$ and let $B$ be the set of all $\beta \ge 2$ for which $a$
  participates. Define $v_1, \ldots, v_\ell$ to be the set of integers in the
  interval $[0, a^2-1)$ that are congruent to some element of
    $B$. Example~\ref{ex:participate} shows that the set $\{v_1, \ldots,
    v_\ell\}$ is nonempty. Lemma~\ref{lem:no_zero} shows that no $v_i = 0$. The
    final statement is immediate from Lemma~\ref{lem:next_one}.
\end{proof}

We can now address the question, ``How big is the set of bases for which a given
$a$ participates in a reversed sum-product pair?'' To that end, define the limit
\[
  \Omega(a) = \lim_{N \to \infty} \frac{1}{N}\left|\{ 2 \le \beta \le N \ : \ a \text{ participates for } \beta\}\right|.
\]

\begin{corollary}
  \label{cor:density}
  For each $a \ge 2$, the limit defining $\Omega(a)$ exists and is of the form
  $\ell / (a^2-1)$ for some integer $1 \le \ell < a^2 - 1$ that depends on
  $a$. In particular, $0 < \Omega(a) < 1$.
\end{corollary}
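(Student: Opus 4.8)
The plan is to deduce the corollary directly from the preceding theorem, reducing it to a routine density computation. Let $B \subseteq \{2, 3, 4, \ldots\}$ be the set of bases for which $a$ participates, and let $(a^2-1)\NN + v_1, \ldots, (a^2-1)\NN + v_\ell$ be the arithmetic progressions furnished by the theorem, with the $v_i$ distinct integers satisfying $0 < v_i < a^2-1$. First I would observe that the second bullet of the theorem gives the inclusion $B \subseteq \bigcup_{i=1}^\ell \big((a^2-1)\NN + v_i\big)$, while the third bullet gives, for each $i$, a threshold $N_i$ beyond which every element of $(a^2-1)\NN + v_i$ lies in $B$. Setting $N_0 = \max_i N_i$, the symmetric difference of $B$ and $\bigcup_{i=1}^\ell \big((a^2-1)\NN + v_i\big)$ is contained in $\{2, \ldots, N_0\}$, hence finite.

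Next I would compute the natural density of the union. Because the $v_i$ are distinct residues modulo $a^2-1$, the progressions $(a^2-1)\NN + v_i$ are pairwise disjoint, and each has natural density $1/(a^2-1)$; hence their union has natural density $\ell/(a^2-1)$. Since modifying a set by finitely many elements changes neither the existence nor the value of the limit defining its density, it follows that $\Omega(a)$ exists and equals $\ell/(a^2-1)$.

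Finally, for the stated bounds: Example~\ref{ex:participate} exhibits infinitely many bases for which $a$ participates, so $\ell \ge 1$ and therefore $\Omega(a) > 0$. In the other direction, the admissible values $v_i$ all lie in $\{1, 2, \ldots, a^2-2\}$ — residue $0$ is excluded (as recorded in the theorem, via Lemma~\ref{lem:no_zero}) — so $\ell \le a^2 - 2$, giving $\Omega(a) = \ell/(a^2-1) < 1$.

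There is no substantive obstacle here; the only points demanding a sliver of care are the standard fact that a finite modification does not perturb the natural density, and the observation that a single threshold $N_0$ suffices because there are only finitely many progressions — both of which are immediate once the preceding theorem is in hand.
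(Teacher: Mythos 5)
Your proposal is correct and follows essentially the same route as the paper: both reduce the corollary to the theorem by noting that the set of participating bases differs from the union of the $\ell$ disjoint progressions modulo $a^2-1$ only by a bounded set, and then compute the resulting density $\ell/(a^2-1)$ with the bounds $1 \le \ell \le a^2-2$. The paper phrases this as an explicit count with an $O(1)$ error term rather than via finite symmetric differences, but the substance is identical.
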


\begin{proof}
  Let $v_1, \ldots, v_\ell$ be as in the theorem. Then $1 \le \ell < a^2 -
  1$. For $N$ sufficiently large, we have
  \begin{align*}
    &\left|\{ 2 \le \beta \le N \ : \ a \text{ participates for } \beta\}\right| \\
    &\phantom{xxxxxx}= \left| \bigcup_{i=1}^\ell \left\{(a^2-1)n + v_i \ : \ 1 \le n \le \frac{N}{a^2-1}\right\}\right| + O(1) \\
    &\phantom{xxxxxx}= \sum_{i=1}^\ell \frac{N}{a^2-1} + O(1) 
    = \frac{\ell N}{a^2-1} + O(1).
  \end{align*}
Dividing by $N$ and passing to the limit gives the result. 
\end{proof}

\begin{example}
  \label{ex:Omega2}
  We claim that $\Omega(2) = \frac{1}{3}$. The theorem shows that all bases
  $\beta$ for which $a=2$ participates lies in the arithmetic progressions $3\ZZ
  + 1$ or $3\ZZ + 2$. We will now argue that if $\beta \in 3\ZZ + 2$, then there
  is no reversed sum-product pair for the base $\beta$ other than the
  uninteresting pair $(2,2)$. Write $\beta = 3n + 2$. Solving the recursion
  equations \eqref{eq:recursion_start} for $b_r$, shows that $3b_r = 4 -
  2\lambda + \rho\beta$. Reducing modulo 3 and simplifying gives $\rho \equiv
  \lambda + 1 \pmod{3}$. The only solution to this congruence with $0 \le
  \lambda, \rho < 2$ is $\lambda = 0$ and $\rho = 1$. As $\lambda \ne \rho$, we
  must continue the recursion. A similar analysis applied to
  \eqref{eq:recursion} shows that $\rho' = 0$ and $\lambda' = 1$. But then the
  equations \eqref{eq:recursion} become
  \begin{align*}
    2x' + 1 &= y' \\
    2y' + 1 &= x',
  \end{align*}
  which has $x' = y' = -1$ as their unique solution. These are not valid digits
  in any base. This contradiction shows that the only reversed sum-product
  pair for a base $\beta \in 3\ZZ + 2$ is $(2,2)$.
\end{example}

A similar strategy to the one in Example~\ref{ex:Omega2} allows us to compute
the ratio $\Omega(a)$ for any $a$.  Table~\ref{tab:omega} gives the first few
values. (Given $a$, the procedure for determining which arithmetic progressions
actually contain bases $\beta$ for which $a$ participates is implemented in the
function \texttt{construct\_generic\_automata} in our Python module.)

\begin{table}[!ht]
\begin{tabular}{rlcrl}\toprule
  $a$ & \hspace{.5cm} $\Omega(a)$ & \hspace{.5cm} & $a$ & \hspace{.5cm} $\Omega(a)$ \\
  \midrule
  2 & $ 1/3 \approx 0.333 $ && 7 & $ 4/48 \approx 0.083$\\
  3 & $ 1/8 = 0.125 $ && 8 & $ 22/63 \approx 0.349$\\
  4 & $ 4/15 \approx 0.267 $ &&  9 & $ 12/80 = 0.15 $\\
  5 & $ 3/24 = 0.125 $ && 10 & $ 26/99 \approx 0.263 $ \\
  6 & $ 13/35 \approx 0.371 $\\
  \bottomrule
\end{tabular}
\vspace{.2cm}
\caption{The value of the ratio $\Omega(a)$ for $a \le 10$.}
\label{tab:omega}
\end{table}


\section{Existence of Interesting Pairs}
\label{sec:compute}

Recall that a reversed sum-product pair $(a,b)$ for the base $\beta$ is deemed
to be \textbf{interesting} if it is not one of the pairs $(2,2)$ or
$(\beta-1,\beta-1)$.  For a given base $\beta$, do we expect to find any
interesting reversed sum-product pair at all? The following propositions provide
support for Conjecture~\ref{conj:interesting} in the introduction.

\begin{proposition}
  \label{prop:few-exceptions1}
  Among all bases $\beta \ge 2$, at least $99.3\%$ of them admit an interesting
  reversed sum product-pair.
\end{proposition}

\begin{proposition}
  \label{prop:few-exceptions2}
  The only bases $\beta < 1,441,440$ for which there is no interesting reversed
  sum-product pair are
  \[
  2,3,4,5,6,7,8,9,12,15,21.
  \]
\end{proposition}

We describe a sieving procedure that will allow us to prove both propositions
simultaneously:
\begin{enumerate}
\item Set $B 
  = 1,441,440$,
  and consider an array of integers from $0$ to $B-1$.
\item For each $a < 100$ such that $a^2 - 1$ divides $B$, do the following:
  
    \begin{itemize}
      \item[(a)] Compute $v_1, \ldots, v_\ell$ as described by Theorem~\ref{thm:participate}.
      \item[(b)] For each arithmetic progression $(a^2-1)\NN + v_i$, let $T_i
        \ge 0$ be minimal such that $\beta = (a^2-1)T_i + v_i$ admits  an
        interesting reversed sum-product pair.
      \item[(c)] For each $t \ge T_i$ such that $ \beta = (a^2-1)t + v_i$ lies in the
        interval $[0, B)$, cross $\beta$ off of our array.
    \end{itemize}
  \item For each $\beta \in [0,B)$ that we have not crossed off yet, construct
    the DFAs $A_{\beta,a}$ for $a \ge 2$. If we find one that gives rise to an
    interesting reversed sum-product pair, cross $\beta$ off of our array.
\end{enumerate}

Step~(2a) can be accomplished by a ``generic'' version of the construction in
\S\ref{sec:DFA}. Set $\beta = (a^2-1)T + v$ for some fixed $0 < v < a^2-1$. We
solve the recursion equations~\eqref{eq:recursion_start} for integers $0 \le
\lambda, \rho < a$ and with $b_0,b_r$ being linear polynomials in $T$. Then we
solve \eqref{eq:recursion} for $0 \le \lambda',\rho' < a$ and with $x',y'$ being
linear polynomials in $T$. This is implemented in our Python module in the
function \texttt{construct\_generic\_automata}.

To determine $T_i$ as in Step~(2b), we can specialize a generic DFA to $T = 0,
1, \ldots$ until we find a valid DFA $A_{(a^2-1)T+v_i, a}$.  At the end of
Step~(2), we find that $1,431,542$ of the elements of our array have been
crossed off. Since $a^2-1$ divides $B$ for each $a$ used in the computation,
Lemma~\ref{lem:next_one} tells us that all sufficiently large integers $\beta$
in each of these residue classes admit an interesting reversed sum-product
pair. That is, this holds for at least $\frac{1,431,542}{1,441,440} \approx
99.31\%$ of all bases, which proves Proposition~\ref{prop:few-exceptions1}. This
part of the computation took approximately 5.5 minutes on a Xeon(R) E5-2699
processor (2.30GHz with 500GB memory).

The construction of the DFAs $A_{\beta,a}$ as in Step~(3) is implemented in our
Python module in the function \texttt{construct\_automata}. Applying it, we
cross off all remaining entries in the array except for
\[
0, 1, 2,3,4,5,6,7,8,9,12,15,21.
\]
Since $\beta = 0,1$ are not valid numerical bases, we may drop these from
consideration, thus proving Proposition~\ref{prop:few-exceptions2}. This step
required an additional 1.5 hours to complete on the same processor as above.

\begin{remark}
Trying Step~(2) again with the larger parameter $B = 53,542,288,800$ took $42$
hours and gives the improved lower bound of $\frac{53497192379}{53542288800}
\approx 99.92\%$ in Proposition~\ref{prop:few-exceptions1}. We did not extend
the computation in Step~(3) to this larger value of $B$.
\end{remark}

\medskip

\noindent \textbf{Acknowledgments}. The first author would like to thank his
children, Henry and Yona, for the conversation that stimulated this work, and
for their patience with him as a mathematics teacher during a worldwide
pandemic. The second author would like to thank his children, Jack and Salem,
for their patience with him as an officemate for the first few months of the
pandemic. We also thank one of the anonymous referees for pointing us toward
related literature.

\bibliographystyle{plain}
\bibliography{rsp}

\begin{thebibliography}{1}

\bibitem{Allouche_Shallit_book}
Jean-Paul Allouche and Jeffrey Shallit.
\newblock {\em Automatic sequences}.
\newblock Cambridge University Press, Cambridge, 2003.
\newblock Theory, applications, generalizations.

\bibitem{holt_permutiples}
Benjamin~V. Holt.
\newblock On permutiples having a fixed set of digits.
\newblock {\em Integers}, 17:Paper No. A20, 21, 2017.

\bibitem{Hopcroft_Ullman}
John~E. Hopcroft and Jeffrey~D. Ullman.
\newblock {\em Introduction to automata theory, languages, and computation}.
\newblock Addison-Wesley Publishing Co., Reading, Mass., 1979.
\newblock Addison-Wesley Series in Computer Science.

\bibitem{RSS_automata}
Aayush Rajasekaran, Jeffrey Shallit, and Tim Smith.
\newblock Additive number theory via automata theory.
\newblock {\em Theory Comput. Syst.}, 64(3):542--567, 2020.

\bibitem{2178}
N.~J.~A. Sloane.
\newblock 2178 and all that.
\newblock {\em Fibonacci Quart.}, 52(2):99--120, 2014.

\bibitem{Szilard_et_al}
Andrew Szilard, Sheng Yu, Kaizhong Zhang, and Jeffrey Shallit.
\newblock Characterizing regular languages with polynomial densities.
\newblock In {\em Mathematical foundations of computer science 1992 ({P}rague,
  1992)}, volume 629 of {\em Lecture Notes in Comput. Sci.}, pages 494--503.
  Springer, Berlin, 1992.

\bibitem{young1}
Anne~Ludington Young.
\newblock {$k$}-reverse multiples.
\newblock {\em Fibonacci Quart.}, 30(2):126--132, 1992.

\bibitem{young2}
Anne~Ludington Young.
\newblock Trees for {$k$}-reverse multiples.
\newblock {\em Fibonacci Quart.}, 30(2):166--174, 1992.

\end{thebibliography}
\end{document}